\pgfplotsset{compat=newest}
\newcommand{\re}{\mathbb{R}}
\newcommand{\N}{\mathbb{N}}
\newcommand{\lmd}{\lambda}
\def\rank{\mbox{rank}}
\newcommand{\reff}[1]{(\ref{#1})}
\newcommand{\mc}[1]{\mathcal{#1}}
\newcommand{\supp}[1]{\mbox{supp}(#1)}
\newcommand{\st}{\mathit{s.t.}}
\newcommand{\qmod}[1]{\mbox{QM}[#1]}
\newcommand{\RN}[1]{%
  \textup{\uppercase\expandafter{\romannumeral#1}}%
}
\newcommand{\bdes}{\begin{description}}
	\newcommand{\edes}{\end{description}}
\newcommand{\bal}{\begin{align}}
\newcommand{\eal}{\end{align}}
\newcommand{\bnum}{\begin{enumerate}}
	\newcommand{\enum}{\end{enumerate}}
\newcommand{\bit}{\begin{itemize}}
	\newcommand{\eit}{\end{itemize}}
\newcommand{\bea}{\begin{eqnarray}}
\newcommand{\eea}{\end{eqnarray}}
\newcommand{\be}{\begin{equation}}
\newcommand{\ee}{\end{equation}}
\newcommand{\baray}{\begin{array}}
	\newcommand{\earay}{\end{array}}
\newcommand{\bsry}{\begin{subarray}}
	\newcommand{\esry}{\end{subarray}}
\newcommand{\bca}{\begin{cases}}
	\newcommand{\eca}{\end{cases}}
\newcommand{\bcen}{\begin{center}}
	\newcommand{\ecen}{\end{center}}
\newcommand{\bbm}{\begin{bmatrix}}
	\newcommand{\ebm}{\end{bmatrix}}
\newcommand{\bmx}{\begin{matrix}}
	\newcommand{\emx}{\end{matrix}}
\newcommand{\bpm}{\begin{bmatrix}}
	\newcommand{\epm}{\end{bmatrix}}
\newcommand{\btab}{\begin{tabular}}
	\newcommand{\etab}{\end{tabular}}
\newcommand{\MATLAB}{\textsc{Matlab}\xspace}
\newtheorem{theorem}{Theorem}[section]
\newtheorem{prop}[theorem]{Proposition}
\newtheorem{cor}[theorem]{Corollary}
\theoremstyle{definition}
\newtheorem{example}[theorem]{Example}
\newcommand{\ddd}{,\ldots,}
\numberwithin{equation}{section}
\pgfplotsset{every axis/.append style={tick label style={/pgf/number format/fixed},font=\scriptsize,ylabel near ticks,xlabel near ticks,grid=major}}
  \pgfmathsetmacro{\myx}{#1}%
  \pgfmathtruncatemacro{\myxmin}{#2}%
  \pgfmathtruncatemacro{\myxmax}{#3}%
  \pgfmathsetmacro{\mysum}{0}%
\pgfmathsetmacro{\mysum}{\mysum+exp(\XX)}}%
\begin{document}

\title[Log-Polynomial Optimization]
{Log-Polynomial Optimization}

\author[Jiyoung Choi]{Jiyoung~Choi}
\author[Jiawang Nie]{Jiawang~Nie}
\author[Xindong Tang]{Xindong~Tang}
\author[Suhan Zhong]{Suhan~Zhong}

\address{Jiyoung Choi, Jiawang Nie, Department of Mathematics, 
	University of California San Diego, 9500 Gilman Drive, La Jolla, CA, USA, 92093.}
\email{jichoi@ucsd.edu, njw@math.ucsd.edu}

\address{Xindong Tang, Department of Mathematics,
Hong Kong Baptist University,
Kowloon Tong, Kowloon, Hong Kong.}
\email{xdtang@hkbu.edu.hk}

\address{Suhan Zhong, School of Mathematical Sciences,
Shanghai Jiao Tong University,
Shanghai, China, 200240.}
\email{suzhong@sjtu.edu.cn}

\begin{abstract}
We study an optimization problem in which the objective is 
given as a sum of logarithmic-polynomial functions.
This formulation is motivated by statistical estimation principles 
such as maximum likelihood estimation, 
and by loss functions including cross-entropy and Kullback-Leibler divergence.
We propose a hierarchy of moment relaxations based on the truncated 
$K$-moment problems to solve log-polynomial optimization.
We provide sufficient conditions for the hierarchy to be tight 
and introduce a numerical method to extract the global optimizers 
when the tightness is achieved.
In addition, we modify relaxations with optimality conditions 
to better fit log-polynomial optimization with convenient 
Lagrange multipliers expressions.
Various applications and numerical experiments are 
presented to show the efficiency of our method.
\end{abstract}

\maketitle

\section{Introduction}

Consider the log-polynomial optimization
\begin{equation}\label{eq:problem}
\left\{\begin{array}{cl}
\max\limits_{x\in\re^n} &
 f(x) = \sum\limits_{i=1}^m a_i\log p_i(x)  \\
\st & c_j(x) = 0 \,\, (j \in \mathcal{E}),\\
 & c_j(x) \geq 0 \,\, (j \in \mathcal{I}),
\end{array}\right.
\end{equation}
where $a_1,\ldots, a_m$ are positive scalars, 
$\mathcal{E}$ and $\mathcal{I}$ are two finite disjoint label sets
and $p_1,\ldots, p_m, c_j$ are polynomials for all $j\in \mc{E}\cup J$. 
Denote \(c_{\mc{E}} = (c_j)_{j\in\mc{E}}\) and \(c_{\mc{I}} = (c_j)_{j\in\mc{I}}\).
The feasible set of \eqref{eq:problem} can be written as
\begin{equation}\label{eq:K}
	K  \coloneqq  \{ x\in\re^n : c_{\mc{E}}(x) = 0,\, c_{\mc{I}}(x) \ge  0\}.
\end{equation}
To ensure the objective function in \eqref{eq:problem} is well defined, 
we assume $p_i\ge 0$ over \(K\) and there exists a point $\hat{x}\in K$
such that $p_i(\hat{x})>0$ for all $i\in \mc{E}\cup \mc{I}$ throughout this paper.
For convenience, denote 
\[
p^a(x) \coloneqq p_1(x)^{a_1}\cdots p_m(x)^{a_m} = e^{f(x)}.
\]
The log-polynomial optimization \eqref{eq:problem}
is typically nonconvex.
If all $a_i$ are positive integers, 
then $p^a$ is a polynomial.
In this case, the problem \eqref{eq:problem} 
is equivalent to maximizing $p^a$ over $K$.
While the polynomial optimization reformulation 
can be globally solved via Moment-SOS relaxations \cite{lasserre2001,MPO}, 
the computational cost increases significantly when the exponents $ a_i $ are large. 
If any $a_i$ is not an integer, the problem \eqref{eq:problem}
usually cannot be expressed as a polynomial optimization problem.
Consequently, when $a_i$ for all $i$ are general positive scalars,
it is difficult to compute a useful upper bound 
or certify the global optimality of \eqref{eq:problem}.

Log-polynomials has many applications. 
In combinatorial optimization, 
log-concave polynomials serve as an important tool 
to study counting problems \cite{Anari24}. 
In statistical learning, many common loss functions,
such as the log-likelihood function and cross-entropy, 
are defined with logarithmic terms.
These functions are widely used in density estimation 
\cite{Carpenter2018,Dempster1977,Fisher1922,Vinayak2019,Silverman1986} and
classification \cite{Bishop2006,Goodfellow2016,Goodfellow2014,Mannor2005,Mao2023,Zhang2018} tasks,
where parameters to be learned are often subject to
structural constraints \cite{Gusmao2024,Hansen2024}
such as non-negativity, normalization, or sparsity-inducing regularization
\cite{Banerjee2008,Karanam2025,Lauritzen2019,Rayas2022,Srivastava2024}.
Suppose the selected models are parametrized by polynomial functions.
Then the learning process reduces to solving log-polynomial optimization.
We illustrate this framework via the following motivating applications.

{\it Maximum likelihood estimation (MLE)} is a core problem in statistics.
Consider a sample sequence $\mc{D}$ generated from a polynomial 
model $p(\xi; x)$, where the parameter $x$ is constrained in a feasible set $K$.
Let $S = \supp{\mc{D}} = \{\xi^{(1)}, \ldots, \xi^{(m)}\}$ 
denote the support set of $\mc{D}$.
We use $a_i$ to count the number of samples in $\mc{D}$ that equal $\xi^{(i)}$
for $i=1,\ldots, m$.
The {\em likelihood function} associated with $\mc{D}$ 
and the corresponding {\em log-likelihood function} are respectively 
defined as
\[
L(\mc{D};x) \coloneqq \prod\limits_{i=1}^m p(x;\xi^{(i)})^{a_i},
\quad 
\log L(\mc{D}; x) = \sum\limits_{i=1}^m a_i\log p(x;\xi^{(i)}).
\]
The MLE aims to find parameter values that maximize the likelihood function,
which is equivalent to maximizing the log-likelihood.

{\it Cross-entropy} and {\it Kullback-Leibler (KL) divergence} 
are two common loss functions in classification tasks.
Consider the discrete random variable $\xi$ supported on 
$S = \{\xi^{(1)}, \ldots, \xi^{(m)}\}$, 
whose true distribution $P$ has $p(\xi)$ to be its {\em probability mass function} (PMF). 
Let $Q(x)$ be the predicted distribution with a polynomial PMF $q(\xi; x)$.
The {\em cross-entropy} between $P$ and $Q(x)$ is
\[
H(P,Q(x)) \coloneqq -\sum\limits_{i=1}^m p(\xi^{(i)})\log q(\xi^{(i)}; x).
\]
The KL divergence from $P$ to $Q(x)$ is 
\[
D_{\operatorname{KL}}(P\,\|\,Q(x)) \coloneqq 
\sum\limits_{i=1}^m p(\xi^{(i)})
\big(\log p(\xi^{(i)})-\log q(\xi^{(i)};x)\big).
\] 
In a learning process, the goal is to determine an optimal parameter
$x\in K$ such that $H(P, Q(x))$ or $D_{\operatorname{KL}}(P\,\|\,Q(x))$
is minimized.

\subsection*{Contributions}
In this paper, we propose a hierarchy of moment relaxations 
for log-polynomial optimization.
A generic polynomial optimization problem can be solved globally 
by the Moment-SOS hierarchy \cite{lasserre2001}.
This motivates us to extend this methodology 
to log-polynomial optimization.
We construct a moment relaxation hierarchy for \reff{eq:problem},
where each relaxation problem has a log-linear objective and
semidefinite constraints.
We study the properties of these relaxations and 
provide sufficient conditions to certify their finite convergence 
(i.e., the relaxation is tight at a finite relaxation order).
When the finite convergence is confirmed, 
we give convenient methods to extract the global optimizer(s).
In particular, we analyze the special class of log-polynomial optimization 
given by SOS-concave polynomials.

This paper is organized as follows.
Section~\ref{sec:Pre} introduces the notation and some preliminaries.
Section~\ref{sec:rel} presents the proposed moment relaxation 
approach based on the truncated $K$-moment problems.
In Section~\ref{sec:tight}, we provide theoretical guarantees for 
the relaxation's tightness and discuss conditions under 
which the global optimizers can be extracted.
In Section~\ref{sec:LME}, we introduce Lagrange multiplier expressions (LMEs)
to obtain strengthened moment relaxations.
Numerical experiments and applications are presented in Section~\ref{sec:exp}.
The conclusion and some discussions are given in Section \ref{sec:con}.

\section{Preliminaries}\label{sec:Pre}

\subsection*{Notation}
The symbol $\mathbb{N}$ (resp., $\mathbb{R}$) represents the set of 
nonnegative integers (resp., real numbers).
For a positive integer \(n\), \([n]\coloneqq \{1,\ldots, n\}\) and 
$\mathbb{R}^n$ denotes the $n$-dimensional real Euclidean space.
For a vector $u \in \mathbb{R}^n$, $\|u\|$ denotes the standard 
Euclidean norm and $\delta_{u}$ denotes the Dirac measure supported at $u$.
The $\mathbf{1}_{n}\in\re^n$ denotes the vector of all ones, and
$\mathbf{0}_{n_1 \times n_2}$ denotes the zero matrix of dimension $n_1 \times n_2$.
For notations $\mathbf{1}_{n}$ and $\mathbf{0}_{n_1\times n_2}$,
the subscripts may be omitted if the dimension is clear in the context.
Let $A$ be a real symmetric matrix. 
The inequality $A\succeq 0$ (resp., $A\succ 0$)
means that $A$ is positive semidefinite (resp., positive definite).
Let $x \coloneqq (x_1, \ldots, x_n)$ be a vector of variables.
We use $\mathbb{R}[x]$ to denote the ring of polynomials 
with real coefficients in $x$,
and $\mathbb{R}[x]_d$ its subset of polynomials
with degrees no greater than $d$.
For a function $f(x)$, $\nabla f(x)$ denotes its full gradient
and $\nabla_{x_i} f(x)$ denotes its gradient with respect to $x_i$.
For $\alpha \coloneqq (\alpha_1, \ldots, \alpha_n)\in \mathbb{N}^n$,
denote $x^\alpha \coloneqq x_1^{\alpha_1} \cdots x_n^{\alpha_n}$,
 whose degree is counted by $|\alpha| \coloneqq \alpha_1 + \cdots + \alpha_n$.
Given a degree \(d\), we denote the power set
\[
\mathbb{N}_d^n \coloneqq \{\alpha \in \mathbb{N}^n : |\alpha| \leq d \} .
\]
The column vector of all monomials in $x$ with degrees up to $d$ is denoted as
\begin{equation}\label{eq:xvec}
    [x]_d \coloneqq \begin{bmatrix}
        1 & x_1 & \ldots & x_n & x_1^2 & x_1x_2 & \ldots & x_n^d
    \end{bmatrix}^T.
\end{equation}

\subsection{Nonnegative polynomials}

For polynomials $p, q \in \mathbb{R}[x]$ and 
subsets $I, J \subseteq \mathbb{R}[x]$, 
their product and sum are respectively defined as
\begin{equation*}
p \cdot I \coloneqq \{pq : q \in I\} 
\quad \text{and} \quad 
I+J \coloneqq \{a+b : a \in I, b \in J\}.
\end{equation*}
A subset $I \subseteq \mathbb{R}[x]$ is an \textit{ideal} 
of $\mathbb{R}[x]$ if $I+I \subseteq I$ and $\mathbb{R}[x] \cdot I \subseteq I$.
A polynomial $\sigma \in \mathbb{R}[x]$ is a \textit{sum of squares (SOS)} 
if $\sigma = p_1^2 + \cdots + p_k^2$ for some polynomials $p_i \in \mathbb{R}[x]$. 
The set of all SOS polynomials is denoted by $\Sigma[x]$.

Let  $h = (h_1, \ldots, h_m)$ and $g=(g_1,\ldots, g_t)$
be two polynomial tuples. 
The real zero set of $h$ is the set 
$Z(h) \coloneqq \{x \in \mathbb{R}^n: h_1(x) = \cdots = h_m(x) = 0\}$.
The ideal generated by $h$ is
\begin{equation*}
\mbox{Ideal}[h]\, \coloneqq \, 
h_1 \cdot \mathbb{R}[x] + \cdots + h_m \cdot \mathbb{R}[x] .
\end{equation*}
It is clear that every $p\in \mbox{Ideal}(h)$ vanishes on $Z(h)$.
Consider the semialgebraic set determined by $g$:
\begin{equation*}
S(g) \coloneqq \{x \in \mathbb{R}^n : g_1(x) \geq 0, \ldots, g_t(x) \geq 0\}.
\end{equation*}
A polynomial $f$ is non-negative on $S(g)$ if it can be decomposed as
\begin{equation}\label{eq:fsos}
f = \sigma_0 + \sigma_1 g_1 + \cdots + \sigma_t g_t,
\end{equation}
for some $\sigma_i\in \Sigma[x]$. 
The \textit{quadratic module} generated by $g$ consists of all polynomials
in form of \eqref{eq:fsos}, which reads
\begin{equation*}
\mbox{QM}[g] \coloneqq \Sigma[x] + g_1 \cdot \Sigma[x]
 + \cdots + g_t \cdot \Sigma[x].
\end{equation*}
If \(p\in \operatorname{Ideal}[h] + \operatorname{QM}[g]\),
then $p(x)\ge 0$ for all $x\in Z(h) \cap S(g)$.
But the converse is not always true.
The set $\operatorname{Ideal}[h] + \operatorname{QM}[g]$ 
is said to be \textit{Archimedean} if it contains a polynomial $q$ 
such that $\{x \in \mathbb{R}^n : q(x) \ge 0\}$ is a compact set.
Under this condition, if a polynomial $f$ is strictly 
positive on $Z(h)\cap S(g)$, 
then it must belong to $\operatorname{Ideal}[h]+\operatorname{QM}[g]$.
This result is called \textit{Putinar's Positivstellensatz} \cite{Putinar}.

\subsection{Moment and localizing matrices}
\label{ssc:momloc}

The space of real vectors indexed by $\alpha \in \mathbb{N}_{2k}^n$ 
is denoted by $\mathbb{R}^{\mathbb{N}_{2k}^n}$. 
A vector $y = (y_\alpha)_{\alpha \in \mathbb{N}_{2k}^n}$ 
in this space is called a \textit{truncated multi-sequence} (tms) of degree $2k$. 
For an integer $k \in [0, d]$, 
the $k$-th order \textit{moment matrix} generated by $y$ is defined as
\begin{equation*}
    M_k[y] \coloneqq [y_{\alpha + \beta}]_{\alpha, \beta \in \mathbb{N}_{k}^n}.
\end{equation*}
The rows and columns of $M_k[y]$ are indexed by multi-indices 
from $\mathbb{N}_{k}^n$, typically arranged in graded lexicographic order.
For example, when $n = 2$ and $k = 2$, the matrix $M_2[y]$ is given by
\begin{equation*}
M_2[y] = \begin{bmatrix}
    y_{00} & y_{10} & y_{01} & y_{20} & y_{11} & y_{02} \\
    y_{10} & y_{20} & y_{11} & y_{30} & y_{21} & y_{12} \\
    y_{01} & y_{11} & y_{02} & y_{21} & y_{12} & y_{03} \\
    y_{20} & y_{30} & y_{21} & y_{40} & y_{31} & y_{22} \\
    y_{11} & y_{21} & y_{12} & y_{31} & y_{22} & y_{13} \\
    y_{02} & y_{12} & y_{03} & y_{22} & y_{13} & y_{04}
\end{bmatrix}.
\end{equation*}
Every polynomial $f\in\re[x]_{2k}$ can be identified with its coefficient vector
$\operatorname{vec}(f) \coloneqq (f_{\alpha})_{\alpha\in \mathbb{N}_{2k}^n}$,
i.e., $f(x) =  \operatorname{vec}(f)^T[x]_{2k}$. 
Thus, a tms $y \in \mathbb{R}^{\mathbb{N}_{2k}^n}$ defines a 
linear functional on $\mathbb{R}[x]_{2k}$ as
\begin{equation}\label{eq:bilinear}
	\langle f, y \rangle \coloneqq 
	\sum_{\alpha \in \mathbb{N}_{2k}^n} f_\alpha y_\alpha = \operatorname{vec}(f)^T y.
\end{equation}
For a polynomial $q \in \mathbb{R}[x]_{2k}$, 
let $s \coloneqq k - \lceil \deg(q)/2 \rceil$.
The product $q(x) [x]_s [x]_s^T$ is a $\binom{n+s}{s}$-by-$\binom{n+s}{s}$
symmetric polynomial matrix. 
Its expansion can be written as
\begin{equation*}
    q(x) [x]_s [x]_s^T = \sum_{\alpha \in \mathbb{N}_{2k}^n} x^\alpha  Q_\alpha
\end{equation*}
for some constant symmetric matrices $Q_\alpha$. 
The $k$-th order \textit{localizing matrix} associated with 
the polynomial $q$ and the tms $y \in \mathbb{R}^{\mathbb{N}_{2k}^n}$ is defined as
\begin{equation*}
    L^{(k)}_q [y] \coloneqq \sum_{\alpha \in \mathbb{N}_{2k}^n} y_\alpha Q_\alpha.
\end{equation*}
For the special case that $q=1$ (the constant unit polynomial), 
this definition recovers the moment matrix, i.e., $L^{(k)}_1 [y] = M_k[y]$.

\subsection{Truncated $K$-moment problems}

A tms $y = (y_\alpha)_{\alpha \in \mathbb{N}^n_{2k}}$ is said to 
\textit{admit} a Borel measure $\mu$ on $K$ if
\begin{equation} \label{eq:y_admits_meas}
 y_\alpha = \int_K x^\alpha d \mu \quad \text{for all} \quad
 \alpha \in \mathbb{N}^n_{2k}.
\end{equation}
Such a measure $\mu$ is called a \textit{representing measure} of $y$.
The support of $\mu$, denoted by $\operatorname{supp}(\mu)$, 
is the smallest closed set $T \subseteq \mathbb{R}^n$ 
such that $\mu(\mathbb{R}^n \setminus T) = 0$.
A measure is called \textit{finitely atomic} if its support is a finite set.
Specifically, if $\operatorname{supp}(\mu) = \{v_1, \ldots, v_r\}$, 
then the measure $\mu$ is called $r$-atomic and can be written as
\[
    \mu = \sum_{i=1}^r \lambda_i \delta_{v_i} \quad 
    \text{for some positive weights } \lambda_1, \dots, \lambda_r > 0.
\]
For a tms $y \in \mathbb{R}^{\mathbb{N}_{2k}^n}$ to admit a representing measure
$\mu$ on $K$ as in \eqref{eq:y_admits_meas}, 
it is necessary that the following matrix conditions hold (see \cite{MPO}):
\begin{equation} \label{eq:loc_mom}
    M_k[y] \succeq 0, \quad L_{c_j}^{(k)}[y] \succeq 0 \,(j \in \mathcal{I}), \quad
    L_{c_j}^{(k)}[y] = 0 \, (j \in \mathcal{E}).
\end{equation}

\section{A semidefinite relaxation hierarchy}
\label{sec:rel}

In this section, we propose an approach of moment relaxations
to solve the log-polynomial optimization problem \reff{eq:problem}.

\subsection{A convex relaxation}
To begin with, we introduce a natural convex relaxation of the 
log-polynomial optimization \eqref{eq:problem}.
Define the degree
\begin{equation}\label{eq:d0d}
d  \coloneqq  \max\,\{ \lceil \deg(p)/2\rceil, 
	\, \lceil\deg(c_{\mc{E}})/2\rceil,
	\, \lceil\deg(c_{\mc{I}})/2\rceil
 \}.
\end{equation}
Each polynomial in \eqref{eq:problem} belongs to the polynomial ring $\re[x]_{2k}$ for all $k\ge d$.
By replacing each involving monomial $x^{\alpha}$ with the auxiliary moment variable $y_{\alpha}$, 
we obtain the following equivalent moment reformulation of \eqref{eq:problem}:
\begin{equation}\label{eq:momequiv}
\left\{\begin{array}{rl}
f_{\max}\coloneqq \max\limits_{z} & \sum\limits_{i=1}^m a_i \log \langle p_i, z\rangle\\
\st & z = [x]_{2d}\mbox{ for some $x\in K$}.
\end{array}
\right.
\end{equation}
Here $\langle \cdot , \cdot \rangle$ is the bilinear operation 
defined in \eqref{eq:bilinear}, 
$[x]_{2d}$ is the $2d$-th degree monomial vector as in \eqref{eq:xvec},
and $K$ is the feasible set given in \eqref{eq:K}.
The problem \eqref{eq:momequiv} has a concave objective function 
since each $a_i>0$ and the logarithm is concave.
However, its feasible set is typically nonconvex and may not have a nonempty interior. 
Thus, most general nonlinear optimization methods are not applicable 
to solve \eqref{eq:momequiv}. 
To address this issue, we define the conic hull generated 
by the feasible set of \eqref{eq:momequiv}:
\begin{equation}  \label{Rd(K)}
\mathscr{R}_d(K) \coloneqq \Big\{ z = \sum\limits_{j=1}^l \lambda_j [v_j]_{2d} :
\lambda_j \ge  0, \,\, v_j \in K,\,\, l \in \mathbb{N} \Big\}.
\end{equation}
It follows that $\mathscr{R}_d(K)\cap \{z_0=1\}$ is the convex hull 
of the set $\{[x]_{2d}: x\in K\}$.
Given that $p_i(x)\ge 0$ for all $i\in [m]$ and $x\in K$, 
each $\log \langle p_i,z\rangle$ is nonnegative on $\mathscr{R}_d(K)\cap \{z_0=1\}$. 
This is because any $z\in \mathscr{R}_d(K)\cap \{z_0=1\}$ 
can be decomposed as $z = \sum_{j=1}^l \lambda_j[v_j]_{2d}$ 
for some $l\in \N$ with all $v_j\in K$ and 
$\lambda_j\ge 0,\,\lambda_1+\cdots+\lambda_l=1$. Hence,
\[
    \langle p_i, z\rangle 
    = \Big\langle p_i,\sum\limits_{i=1}^l \lambda_j[v_j]_{2d}\Big\rangle 
    =  \sum\limits_{j=1}^l \lambda_j \langle p_i, [v_j]_{2d}\rangle 
    = \sum\limits_{j=1}^m \lambda_j p_i(v_j) \ge 0.
\]
This leads to a natural convex relaxation of \eqref{eq:momequiv}:
\begin{equation}  \label{eq:relax_meas}
\left\{\begin{array}{rl}
f_{\operatorname{mom}} \coloneqq \max\limits_{z} 
&  \sum\limits_{i=1}^m a_i \log \langle p_i,z\rangle   \\
\st & z_0 = 1,\, z  \in  \mathscr{R}_d(K) .
\end{array}\right.
\end{equation}
It is clear that $f_{\max}\le f_{\operatorname{mom}}$.
In particular, when $f_{\max} = f_{\operatorname{mom}}$, 
the optimization problem \eqref{eq:relax_meas} is said to 
be a {\it tight} relaxation of \eqref{eq:momequiv}.
We remark that the relaxation gap $f_{\operatorname{mom}}-f_{\max}$ can be
arbitrarily large.
This phenomenon is illustrated in the following example.
\begin{example}\label{ex:gap}
    Consider the univariate log-polynomial optimization problem
    \begin{equation}\label{eq:ex_gap}
        \left\{\begin{array}{cl}
        \max\limits_{x\in\re} & 0.5\log(1+\epsilon-x)+0.5\log(1+\epsilon+x)\\
        \st & 1-x^2 = 0,
        \end{array}
        \right.
    \end{equation}
    where $\epsilon>0$ is a small scalar. 
    Since the feasible set $K = \{1,-1\}$ only contains two points, 
    we can express the conic hull $\mathscr{R}_1(K)$ explicitly, 
    thus obtain the convex relaxation of \eqref{eq:ex_gap}:
    \[
    \left\{\begin{array}{cl}
    	\max\limits_{z\in\re^3} & 0.5\log(1+\epsilon-z_1)+0.5\log(1+\epsilon+z_1)\\
    	\st &  z_0=z_2 = 1,\, -1\le z_1\le 1.
    \end{array}
    \right.
    \]
    The optimal values of \eqref{eq:ex_gap} and its convex relaxations are
    respectively derived as 
    \[
    f_{\max} = 0.5\log(\epsilon)+0.5\log(2+\epsilon),
    \quad
    f_{\operatorname{mom}} = \log(1+\epsilon).
    \]
    It is clear that the relaxation gap 
    $f_{\operatorname{mom}} - f_{\max}\ge 0-0.5\log(\epsilon)\to \infty$
    as $\epsilon\to 0$.
\end{example}
We now provide sufficient conditions for the relaxation 
\eqref{eq:relax_meas} to be tight. 
\begin{prop}\label{prop:gap_maxmom}
    Suppose $K$ is nonempty. 
    Then the convex relaxation \eqref{eq:relax_meas} is a tight relaxation of 
    the log-polynomial optimization \eqref{eq:problem} if one of the following conditions holds.
    \begin{enumerate}
        \item[(i)] The optimal value of the moment reformulation 
        \eqref{eq:momequiv} equals that of
        \begin{equation}\label{eq:min_gamma}
            \left\{\begin{array}{cl}
            \min & \gamma_1+\cdots+\gamma_m\\
            \st & \gamma_i- a_i\log p_i(x)\ge 0 \mbox{ for all $x\in K$}\, (i\in[m]),\\
            & \, \gamma_1,\ldots, \gamma_m\in\re.
            \end{array}
            \right.
        \end{equation}
        
        \item[(ii)] There exists a maximizer 
        $z^* = \sum_{j=1}^l\lambda_j[v_j^*]_{2d}$ of the convex relaxation
        \eqref{eq:relax_meas} with all $\lambda_j\ge 0$ 
        and $v_j^*\in K$ such that $p(v_1^*) = p(v_2^*) = \cdots = p(v_l^*)$.
    \end{enumerate}  
\end{prop}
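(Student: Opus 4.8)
The plan is to sandwich $f_{\operatorname{mom}}$ between $f_{\max}$ and the optimal value of \eqref{eq:min_gamma} by a weak-duality argument, so that either hypothesis collapses the chain to equalities. The inequality $f_{\max}\le f_{\operatorname{mom}}$ is already recorded after \eqref{eq:relax_meas}: any $x\in K$ yields the feasible point $z=[x]_{2d}$ of \eqref{eq:relax_meas} with $z_0=1$, on which the two objectives agree. The remaining half of the sandwich, namely that $f_{\operatorname{mom}}$ is at most the value of \eqref{eq:min_gamma}, is where I would put the work.

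For that half I would take any $z$ feasible for \eqref{eq:relax_meas} and any $(\gamma_1,\dots,\gamma_m)$ feasible for \eqref{eq:min_gamma}, and write $z=\sum_{j=1}^l\lambda_j[v_j]_{2d}$ with $\lambda_j\ge 0$, $v_j\in K$; since $z_0=1$ and the leading entry of each $[v_j]_{2d}$ is $1$, the weights satisfy $\sum_j\lambda_j=1$. Feasibility of $\gamma$ evaluated at the point $v_j\in K$ gives $\gamma_i\ge a_i\log p_i(v_j)$, i.e. $p_i(v_j)\le \exp(\gamma_i/a_i)$ for every $i,j$ (this holds trivially where $p_i(v_j)=0$). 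Averaging with the $\lambda_j$ and using $\sum_j\lambda_j=1$ yields $\langle p_i,z\rangle=\sum_j\lambda_j p_i(v_j)\le \exp(\gamma_i/a_i)$, hence $a_i\log\langle p_i,z\rangle\le\gamma_i$; summing over $i$ gives $\sum_i a_i\log\langle p_i,z\rangle\le\sum_i\gamma_i$. Taking the supremum over $z$ and the infimum over $\gamma$ proves $f_{\operatorname{mom}}\le$ (value of \eqref{eq:min_gamma}). Part (i), which assumes $f_{\max}$ equals the value of \eqref{eq:min_gamma}, then squeezes the chain to $f_{\max}=f_{\operatorname{mom}}$.

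For part (ii) I would argue directly from the given maximizer $z^*=\sum_{j=1}^l\lambda_j[v_j^*]_{2d}$. As above $\sum_j\lambda_j=1$, and the hypothesis that the vectors $p(v_j^*)=(p_1(v_j^*),\dots,p_m(v_j^*))$ coincide for all $j$ lets me pull the common value out of the average: $\langle p_i,z^*\rangle=\sum_j\lambda_j p_i(v_j^*)=p_i(v_1^*)$ for every $i$. Consequently $f_{\operatorname{mom}}=\sum_i a_i\log\langle p_i,z^*\rangle=\sum_i a_i\log p_i(v_1^*)=f(v_1^*)$. Since $v_1^*\in K$ is feasible for \eqref{eq:problem}, we have $f(v_1^*)\le f_{\max}\le f_{\operatorname{mom}}$, and the chain closes to $f_{\max}=f_{\operatorname{mom}}$.

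The computations are short; the only genuine care-points are (a) reading the condition $p(v_1^*)=\cdots=p(v_l^*)$ in part (ii) as coordinatewise equality of the tuple $p=(p_1,\dots,p_m)$ — under the product interpretation via $p^a$ one would only get $f(v_1^*)=\cdots=f(v_l^*)$ and, through Jensen's inequality, a bound on $f_{\operatorname{mom}}$ in the wrong direction — and (b) confirming that \eqref{eq:min_gamma} is exactly the dual that makes the weak-duality estimate land on $\sum_i\gamma_i$. Both reduce to the elementary monotonicity of $\log$ together with $\sum_j\lambda_j=1$, so I expect no substantive obstacle beyond pinning down these interpretations and the positivity conventions needed for the logarithms.
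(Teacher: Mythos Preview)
Your proposal is correct and follows essentially the same approach as the paper: for (i) both arguments reduce to showing that feasibility of $\gamma$ forces $\langle p_i,z\rangle\le e^{\gamma_i/a_i}$ on $\mathscr{R}_d(K)\cap\{z_0=1\}$ and hence $\sum_i a_i\log\langle p_i,z\rangle\le\sum_i\gamma_i$, while (ii) is the same direct computation using the common value $p_i(v_1^*)=\cdots=p_i(v_l^*)$. Your reading of the condition in (ii) as coordinatewise equality of the tuple $p=(p_1,\dots,p_m)$ is exactly the intended one, and your weak-duality phrasing of (i) is a mild (and slightly cleaner) repackaging of the paper's argument, which instead fixes an optimal $\hat\gamma$ at the outset.
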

\begin{proof}
	The original log-polynomial optimization \eqref{eq:problem} is
	equivalent to its moment reformulation \eqref{eq:momequiv}.
	Let $f_{\max},\, f_{\operatorname{mom}}$ denote the optimal values 
	of \eqref{eq:momequiv} and \eqref{eq:relax_meas} respectively.
	It suffices to show $f_{\max}\ge f_{\operatorname{mom}}$.

	(i) If $f_{\max} = \infty$, 
	then $f_{\operatorname{mom}}\ge f_{\max} = \infty$. 
	Suppose $f_{\max}<\infty$ is the optimal value of \eqref{eq:min_gamma}. 
	Let $\hat{\gamma} = (\hat{\gamma}_1,\ldots, \hat{\gamma}_m)$ 
	be the minimizer of \eqref{eq:min_gamma}.
	Then $f_{\max} = \mathbf{1}^T\hat{\gamma}$.
	Note that the inequality $\gamma_i\ge a_i\log p_i(x)$ 
	is equivalent to $e^{\gamma_i/a_i}\ge p_i(x)$. 
	The feasibility of $\hat{\gamma}$ in \eqref{eq:min_gamma} ensures 
	that for all $i\in[m]$,
	\begin{align*}
		e^{\hat{\gamma}_i/a_i}\ge \max\limits_{x\in K}\, p_i(x) 
 		= \max\limits_{x\in K}\,\langle p_i,[x]_{2d}\rangle.
	\end{align*}
	Since $\mathscr{R}_d(K)\cap \{z_0=1\}$ is the 
	convex hull of $\{[x]_{2d}: x\in K\}$,
	the above implies
	\[
		e^{\hat{\gamma}_i/a_i}-\langle p_i, z\rangle \ge 0
		\,\Leftrightarrow\, 
		\hat{\gamma}_i-a_i\log \langle p_i, z\rangle \ge 0
	\]
	for all feasible point of \eqref{eq:relax_meas}.
	Thus, $f_{\max} = \mathbf{1}^T\hat{\gamma}\ge f_{\operatorname{mom}}$.

	(ii) Under given conditions, if $p_i(v_1^*) = \cdots = p_i(v_l^*)$ 
	for all $i\in [m]$, then 
    \[\begin{aligned}
        f_{\operatorname{mom}}  
        & = \sum\limits_{i=1}^m a_i\log \big\langle p_i, 
        	\sum\limits_{j=1}^l \lambda_j [v_j^*]_{2d}\big\rangle
        = \sum\limits_{i=1}^m a_i\log \big(\sum\limits_{j=1}^l 
        	\lambda_j\langle p_i,  [v_j^*]_{2d}\rangle\big)\\
        &= \sum\limits_{i=1}^m a_i\log \big(\sum\limits_{j=1}^l 
        	\lambda_j p_i(v_j^*)\big)
        = \sum\limits_{i=1}^m a_i\log \langle p_i, [v_1^*]_{2d}\rangle\le f_{\max}.
    \end{aligned}\]
	Therefore, the convex relaxation \eqref{eq:relax_meas} is tight 
	if condition (i) or (ii) holds.
\end{proof}

For the special case that $m=1$, 
the log-polynomial optimization \eqref{eq:problem} is equivalent to 
the polynomial optimization of minimizing $p_1(x)$ over $K$. 
The following result is directly implied from Proposition~\ref{prop:gap_maxmom}.
\begin{cor}
    The relaxation \eqref{eq:relax_meas} is tight if $m=1$.
\end{cor}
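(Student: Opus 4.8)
The plan is to verify condition (i) of Proposition~\ref{prop:gap_maxmom}, which is the most direct route when $m=1$. First I would specialize the program \eqref{eq:min_gamma} to the single-term case. With only one logarithmic term, \eqref{eq:min_gamma} collapses to the scalar problem $\min\{\gamma_1 : \gamma_1 - a_1\log p_1(x)\ge 0 \ \text{for all}\ x\in K\}$, whose constraint simply requires $\gamma_1$ to dominate $a_1\log p_1(x)$ uniformly over $K$. Hence its optimal value is exactly $\sup_{x\in K} a_1\log p_1(x)$.

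Next I would compare this value with the optimal value $f_{\max}$ of the moment reformulation \eqref{eq:momequiv}. When $m=1$ the objective of \eqref{eq:momequiv} is the single term $a_1\log\langle p_1, z\rangle$, and the feasibility requirement $z=[x]_{2d}$ for some $x\in K$ forces $\langle p_1, z\rangle = p_1(x)$. Thus \eqref{eq:momequiv} is precisely $\max_{x\in K} a_1\log p_1(x)$, so $f_{\max} = \sup_{x\in K} a_1\log p_1(x)$ as well. The two optimal values coincide, which is exactly the hypothesis of condition (i), so Proposition~\ref{prop:gap_maxmom}(i) yields the tightness of \eqref{eq:relax_meas}.

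I do not expect any serious obstacle here, since the argument is purely a matter of unwinding definitions once the sum over $i$ degenerates to a single term. The only point requiring a little care is the boundary behaviour. If $\sup_{x\in K} a_1\log p_1(x) = \infty$, then $f_{\max}=\infty$ and tightness is immediate from $f_{\operatorname{mom}}\ge f_{\max}$; otherwise the supremum is finite (and attained when $K$ is compact), matching the minimum realized in \eqref{eq:min_gamma}. Both branches are already treated inside the proof of Proposition~\ref{prop:gap_maxmom}(i), so no additional work is needed beyond invoking that result.
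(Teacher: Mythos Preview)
Your proposal is correct and matches the paper's approach: the paper states the corollary as ``directly implied from Proposition~\ref{prop:gap_maxmom}'' without spelling out the details, and verifying condition~(i) by collapsing the single-variable program \eqref{eq:min_gamma} to $\sup_{x\in K} a_1\log p_1(x)=f_{\max}$ is exactly the intended one-line justification.
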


In practice, the problem \eqref{eq:relax_meas} is still difficult to solve.
This is because the moment cone $\mathscr{R}_d(K)$ typically 
does not have a convenient parametric expression for multivariable variables.
On the other hand, the moment cone $\mathscr{R}_d(K)$ 
is often approximated by semidefinite constraints.
This motivates us to further construct 
a hierarchy of semidefinite relaxations of \eqref{eq:relax_meas}.

\subsection{A hierarchy of moment relaxations}

Recall that $K$ is determined by polynomial constraints
\[
 	c_j(x) = 0\,(j\in\mc{E}),\quad c_j(x)\ge 0\,(j\in\mc{I}).
\]
Let $z\in\mathscr{R}_d(K)$ be an arbitrary truncated moment vector. 
For all $k\ge d$, there exists a $k$-th order tms extension of $z$
such that
\[
y=(y_{\alpha})\in\re^{\N_{2k}^n}\,\,\mbox{with}\,\,y_{\alpha} = z_{\alpha}
\, \forall \alpha\in\N_n^d\quad \mbox{and}
\]
\[
M_{k}[y]\succeq 0,\quad L_{c_j}^{(k)}[y] = 0\,(j\in \mc{E}),\quad
L_{c_j}^{(k)}[y]\succeq 0\, (j\in \mc{I}).
\]
In the above, $M_k[y]$ is the $k$-th order moment matrix and each
$L_{c_j}^{(k)}[y]$ is the $k$-th order 
localizing matrix associated with $c_j$, 
which are introduced in Subsection~\ref{ssc:momloc}.
For increasing values of $k = d,d+1,\ldots$, 
we can build a hierarchy of moment relaxations of \eqref{eq:relax_meas} 
with semidefinite constraints: 
\begin{equation}    \label{eq:relax_log}
\left\{\begin{array}{rl}
   f_{\operatorname{mom},k} \coloneqq \displaystyle \max\limits_{y} 
   & \sum\limits_{i=1}^m a_i \log \langle p_i, y \rangle \\
   \st  &  y_0 = 1,\,y \in \mathbb{R}^{\mathbb{N}_{2k}^n},\, M_k [y] \succeq 0,\\
   &   L_{c_j}^{(k)}[y] = 0\,(j \in \mathcal{E}),\\ 
    & L_{c_j}^{(k)}[y] \succeq 0 \,(j \in \mathcal{I} ),
\end{array}\right.
\end{equation}
where the integer $k$ is called the {\it relaxation order}.
To distinguish with the previous convex relaxation \eqref{eq:relax_meas},
we call the problem \eqref{eq:relax_log} the 
$k$-th order {\it moment relaxation}
and the corresponding hierarchy the {\it moment hierarchy}.
Note that each moment relaxation \eqref{eq:relax_log} is convex, 
but distinct from standard semidefinite program 
as it has a log-linear objective function.
In computational practice, it can be solved globally by conic problem 
solvers with interior point methods.
Let $f_{\operatorname{mom},k}$ denote the optimal value of \eqref{eq:relax_log} at 
the $k$-th relaxation order. 
By previous analysis, it is clear that 
\[
    f_{\operatorname{mom}}\le \cdots\le f_{\operatorname{mom},k+1}
    \le f_{\operatorname{mom},k}\le \cdots \le f_{\operatorname{mom},d}.
\]
In the following, we show this relaxation hierarchy exhibits 
asymptotic convergence.
\begin{theorem}   \label{thm:asymp}
	Assume that $\mbox{Ideal}[c_{\mc{E}}]+\qmod{c_{\mc{I}}}$ 
	is Archimedean, and the optimal value of \eqref{eq:relax_meas} 
	equals that of \eqref{eq:min_gamma}. 
	Then $f_{\operatorname{mom},k}\to f_{\operatorname{mom}}$ as $k\to\infty$. 
\end{theorem}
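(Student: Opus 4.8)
The plan is to use the monotonicity already recorded in the excerpt: the sequence $\{f_{\operatorname{mom},k}\}$ is nonincreasing and bounded below by $f_{\operatorname{mom}}$, so it converges to some limit $f^*\ge f_{\operatorname{mom}}$, and the entire task is to produce the matching upper bound $f^*\le f_{\operatorname{mom}}$ via a dual (sum-of-squares) certificate. First I would make two preliminary observations. Because $\mbox{Ideal}[c_{\mc{E}}]+\qmod{c_{\mc{I}}}$ is Archimedean, $K$ is compact, so each $p_i^* \coloneqq \max_{x\in K} p_i(x)$ is finite, and the standing assumption $p_i(\hat x)>0$ gives $p_i^*>0$. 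Next I would identify the optimal value of \eqref{eq:min_gamma}: since the constraint $\gamma_i-a_i\log p_i(x)\ge 0$ for all $x\in K$ is equivalent to $\gamma_i\ge a_i\log p_i^*$, the minimum of $\gamma_1+\cdots+\gamma_m$ equals $\sum_{i=1}^m a_i\log p_i^*$. The theorem's hypothesis therefore reads $f_{\operatorname{mom}}=\sum_{i=1}^m a_i\log p_i^*$, and this identity is exactly what I will invoke at the very end.

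The core of the argument is a dual bound on $\langle p_i,y\rangle$ for feasible $y$. Fix $\veps>0$. Since $p_i\le p_i^*$ on $K$, the polynomial $p_i^*+\veps-p_i$ is strictly positive on $K=Z(c_{\mc E})\cap S(c_{\mc I})$, so by Putinar's Positivstellensatz it belongs to $\mbox{Ideal}[c_{\mc{E}}]+\qmod{c_{\mc{I}}}$, i.e.\ admits a representation $\sigma_0+\sum_{j\in\mc I}\sigma_j c_j+\sum_{j\in\mc E}\phi_j c_j$ with $\sigma_0,\sigma_j\in\Sigma[x]$ and $\phi_j\in\re[x]$, of some finite degree. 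Choosing the relaxation order $k$ large enough that this degree is at most $2k$, I would pair the representation against any $y$ that is feasible for the order-$k$ relaxation \eqref{eq:relax_log}: the condition $M_k[y]\succeq 0$ makes $\langle\sigma_0,y\rangle\ge 0$, the conditions $L_{c_j}^{(k)}[y]\succeq 0$ $(j\in\mc I)$ make $\langle\sigma_j c_j,y\rangle\ge 0$, and the conditions $L_{c_j}^{(k)}[y]=0$ $(j\in\mc E)$ make $\langle\phi_j c_j,y\rangle=0$. Hence $\langle p_i^*+\veps-p_i,\,y\rangle\ge 0$, and since $y_0=1$ this yields $\langle p_i,y\rangle\le p_i^*+\veps$. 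As $\log$ is increasing and every $a_i>0$, taking the supremum over feasible $y$ gives, for all sufficiently large $k$,
\[
f_{\operatorname{mom},k}\;\le\;\sum_{i=1}^m a_i\log\bigl(p_i^*+\veps\bigr).
\]

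To conclude, I would let $k\to\infty$ and then $\veps\downarrow 0$, obtaining $f^*=\lim_k f_{\operatorname{mom},k}\le\sum_{i=1}^m a_i\log p_i^*$, which is precisely the optimal value of \eqref{eq:min_gamma}. Invoking the hypothesis that this value equals $f_{\operatorname{mom}}$ gives $f^*\le f_{\operatorname{mom}}$; combined with the a priori bound $f^*\ge f_{\operatorname{mom}}$, this forces $f_{\operatorname{mom},k}\to f_{\operatorname{mom}}$.

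The hard part will be the degree bookkeeping in the pairing step: I must ensure the Positivstellensatz certificate for $p_i^*+\veps-p_i$ has degree at most $2k$ so that each summand pairs with the order-$k$ moment and localizing matrices (this is why the strict margin $\veps>0$ cannot be dropped—it is exactly what licenses Putinar—and why $k$ must be taken large depending on $\veps$). A secondary point to verify is that the relaxation values remain finite and that the componentwise bounds $\langle p_i,y\rangle\le p_i^*+\veps$ genuinely control the log-objective, which uses $p_i\ge 0$ on $K$, $p_i^*>0$, and monotonicity of $\log$. I would also emphasize that the hypothesis plays the single role of identifying the natural dual limit $\sum_i a_i\log p_i^*$ with $f_{\operatorname{mom}}$; without it the same argument only yields convergence to the value of \eqref{eq:min_gamma}, which may strictly exceed $f_{\operatorname{mom}}$. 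As an alternative one could instead argue by weak-$*$ compactness of the Archimedean-bounded moment sequences, extracting a limiting tms that by Putinar's representation theorem admits a probability representing measure on $K$ and hence a feasible point of \eqref{eq:relax_meas} with the same objective value; I would keep the dual route as primary since it uses the stated hypothesis directly.
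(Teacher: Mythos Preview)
Your proposal is correct and is essentially the paper's own argument: both fix $\veps>0$, apply Putinar's Positivstellensatz to the strictly positive polynomial ``$\text{(constant slightly above }\max_K p_i)-p_i$'', pair the resulting certificate against any order-$k$ feasible $y$ to bound $\langle p_i,y\rangle$, sum, and let $\veps\to 0$. The only cosmetic difference is that you first identify the value of \eqref{eq:min_gamma} explicitly as $\sum_i a_i\log p_i^*$ and use the margin $p_i^*+\veps$, whereas the paper keeps the abstract minimizer $\hat\gamma$ and uses the margin $e^{(\hat\gamma_i+\veps)/a_i}$; these are equivalent since $\hat\gamma_i=a_i\log p_i^*$.
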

\begin{proof}
	Let $\hat{\gamma} = (\hat{\gamma}_1,\ldots, \hat{\gamma}_m)$ 
	denote the optimizer of the minimization problem \eqref{eq:min_gamma}.
	Then for any $\epsilon>0$, $\hat{\gamma}_i+\epsilon - a_i\log p_i(x)>0$ 
	for all $x\in K$ and $i\in [m]$, equivalently,
	\[
		e^{(\hat{\gamma}_i+\epsilon)/a_i}-p_i(x)>0 
		\mbox{ on $K$ for all $i\in[m]$}.
	\]
	Assume $\mbox{Ideal}[c_{\mc{E}}]+\qmod{c_{\mc{I}}}$ is Archimedean.
	By Putinar's Positivstellensatz, there exists $N_0(\epsilon)>0$ such that 
	for all $i\in[m]$ and $k\ge N_0(\epsilon)$,
	\[
		e^{(\hat{\gamma}_i+\epsilon)/a_i}-p_i(x)\in 
		\mbox{Ideal}[c_{\mc{E}}]_{2k}+\qmod{c_{\mc{I}}}_{2k},
	\]
	where $\mbox{Ideal}[c_{\mc{E}}]_{2k}+\qmod{c_{\mc{I}}}_{2k}$ is 
	the $2k$-th degree truncation of 
	$\mbox{Ideal}[c_{\mc{E}}]+\qmod{c_{\mc{I}}}$.
	By \cite[Theorem~2.5.2]{MPO}, every feasible point of \eqref{eq:relax_log} 
	belongs to the dual cone of $\mbox{Ideal}[c_{\mc{E}}]_{2k}+\qmod{c_{\mc{I}}}_{2k}$.
	This implies
	\[
		\langle e^{(\hat{\gamma}_i+\epsilon)/a_i}\cdot 1-p_i, y\rangle 
		= e^{(\hat{\gamma}_i+\epsilon)/a_i}-\langle p_i, y\rangle \ge0 
	\]
	for all feasible point of \eqref{eq:relax_log}.
	Since $\langle p_i,y\rangle \ge 0$ 
	(for the special case that $\langle p_i,y\rangle =0$, 
	the following inequality still holds with $\log(0)=-\infty$),
	by taking the logarithm,
	\[
		\hat{\gamma}_i+\epsilon-a_i\log\langle p_i, y\rangle \ge 0
		\quad \forall i\in[m].
	\]
	Summing up the above inequality over $i\in[m]$, we obtain
	\[
		m\epsilon+\mathbf{1}^T\hat{\gamma}_i
		-\sum\limits_{i=1}^m a_i\log\langle p_i, y\rangle\ge 0
	\]
	for all feasible point of \eqref{eq:relax_log}.
	Note that $f_{\operatorname{mom}} = \mathbf{1}^T\hat{\gamma}$ since 
	\eqref{eq:relax_meas} and \eqref{eq:min_gamma} share the same optimal value.
	Thus, $f_{\operatorname{mom}}\le f_{\operatorname{mom},k}
	\le f_{\operatorname{mom}} + m\epsilon$ for all $k\ge N_0(\epsilon)$.
	Letting $\epsilon\to 0$, we have $N_0(\epsilon)\to \infty$, 
	thus $f_{\operatorname{mom},k}\to f_{\operatorname{mom}}$ as $k\to \infty$.
\end{proof}
Recall that $f_{\max}$ represents the optimal value of the original log-polynomial optimization.
Combining Proposition~\ref{prop:gap_maxmom} and Theorem~\ref{thm:asymp} together, 
we can get the following result.
\begin{cor}\label{cor:asymp}
    Assume that $\mbox{Ideal}[c_{\mc{E}}]+\qmod{c_{\mc{I}}}$ is Archimedean 
    and the condition (i) of Proposition~\ref{prop:gap_maxmom} holds. 
    Then $f_{\operatorname{mom},k}\to f_{\max}$ as $k\to \infty$.
\end{cor}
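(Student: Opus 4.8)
The plan is to obtain the corollary purely by chaining together the two results already established, so the real work reduces to checking that condition~(i) of Proposition~\ref{prop:gap_maxmom} simultaneously supplies both hypotheses needed to invoke Theorem~\ref{thm:asymp}. First I would apply Proposition~\ref{prop:gap_maxmom}: since condition~(i) is assumed, the convex relaxation \eqref{eq:relax_meas} is tight, which is precisely the equality $f_{\max}=f_{\operatorname{mom}}$.

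Next I would verify that Theorem~\ref{thm:asymp} applies. Its two hypotheses are that $\mbox{Ideal}[c_{\mc{E}}]+\qmod{c_{\mc{I}}}$ is Archimedean, which is assumed directly, and that the optimal value of \eqref{eq:relax_meas} equals that of \eqref{eq:min_gamma}. The second hypothesis is where one must be careful about bookkeeping. Condition~(i), as stated in Proposition~\ref{prop:gap_maxmom}, asserts that the optimal value of the \emph{moment reformulation} \eqref{eq:momequiv}, namely $f_{\max}$, coincides with that of \eqref{eq:min_gamma}; whereas Theorem~\ref{thm:asymp} instead requires the optimal value of the \emph{convex relaxation} \eqref{eq:relax_meas}, namely $f_{\operatorname{mom}}$, to equal that of \eqref{eq:min_gamma}. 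The identity $f_{\max}=f_{\operatorname{mom}}$ from the first step bridges exactly this gap, so $f_{\max}$, $f_{\operatorname{mom}}$, and the optimum of \eqref{eq:min_gamma} all share one common value, and the hypothesis of Theorem~\ref{thm:asymp} is met.

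With both hypotheses in place, I would apply Theorem~\ref{thm:asymp} to conclude $f_{\operatorname{mom},k}\to f_{\operatorname{mom}}$ as $k\to\infty$, and then substitute $f_{\operatorname{mom}}=f_{\max}$ to obtain $f_{\operatorname{mom},k}\to f_{\max}$, as claimed. Because the argument is a direct composition of two theorems, there is no genuine analytic obstacle; the only point requiring attention is the label-matching just described, namely ensuring that one does not feed condition~(i) straight into Theorem~\ref{thm:asymp} without first routing it through the tightness equality $f_{\max}=f_{\operatorname{mom}}$.
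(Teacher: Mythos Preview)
Your argument is correct and is exactly the one the paper intends: the corollary is stated immediately after the sentence ``Combining Proposition~\ref{prop:gap_maxmom} and Theorem~\ref{thm:asymp} together, we can get the following result,'' with no further proof. Your careful bookkeeping, observing that condition~(i) equates $f_{\max}$ with the optimum of \eqref{eq:min_gamma} while Theorem~\ref{thm:asymp} asks for $f_{\operatorname{mom}}$ to match that optimum, and that the tightness $f_{\max}=f_{\operatorname{mom}}$ supplied by Proposition~\ref{prop:gap_maxmom} closes this gap, is precisely the content the paper leaves implicit.
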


\section{Tightness analysis of moment relaxations}\label{sec:tight}

The moment relaxation \eqref{eq:relax_log} is said to be 
{\it tight} (or to have finite convergence) to \eqref{eq:problem} 
if they have the same optimal value at a finite relaxation order.
In this section, we study how to certify such tightness and 
how to obtain the maximizers when the tightness is confirmed.  
In particular, we study the special class of log-polynomial optimization 
given by SOS-concave polynomials, of which the finite convergence
is always achieved.

\subsection{Certifying tightness of moment relaxations}

Consider $k$ is the relaxation order.
Let $f_{\operatorname{mom},k}$ denote the optimal value 
of the moment relaxation \eqref{eq:relax_log},
$f_{\operatorname{mom}}$ the optimal value 
of the convex relaxation \eqref{eq:relax_meas},
and $f_{\max}$ the optimal value of the 
original log-polynomial optimization \eqref{eq:problem}. 
Since $f_{\max}\le f_{\operatorname{mom}}\le f_{\operatorname{mom,k}}$, 
a necessary condition for $f_{\max} = f_{\operatorname{mom,k}}$ 
is $f_{\max} = f_{\operatorname{mom}}$.
This holds if an optimal solution $y^*$ of \eqref{eq:relax_log} 
satisfies $y^*|_{2d}\in \mathscr{R}_d(K)$, 
where $y^*|_{2d}$ denotes the $2d$-th degree truncation of $y^*$.
To verify such a membership, we can apply a convenient rank condition 
called {\it flat truncation}, which reads
\begin{equation}\label{eq:flat_truncation}
	\exists t\in[d,k]\quad \mbox{s.t.}\quad    
	\rank\,M_t[y^*] = \rank\,M_{t-d}[y^*].
\end{equation}
Under the flat truncation condition, $y^*$ admits a unique $K$-representing 
atomic measure supported by $r=\rank\, M_t[y^*]$ distinct points. 
In other words, there exist positive scalars $\lmd_1\ddd \lmd_r\in\re$ 
and distinct points $v_1^* \ddd v_r^* \in K$ such that
\begin{equation}\label{eq:ft_decomp}
	y^*|_{2t} = \lmd_1 [v_1^*]_{2t} + \cdots + \lmd_r [v_r^*]_{2t},
\end{equation}
where $y^*|_{2t}$ is the $2t$-th order truncation of $y^*$.
In particular, such a decomposition \eqref{eq:ft_decomp} is unique by 
\cite[Theorem~2.7.7]{MPO}, up to reordering. 
Since $t\ge d$, the decomposition \eqref{eq:ft_decomp} ensures that 
$y^*|_{2d}\in \mathscr{R}_d(K)$.
We then summarize sufficient conditions for \eqref{eq:relax_log} 
to be a tight relaxation for \eqref{eq:problem}.

\begin{theorem}\label{tm:parallel_all}
	Suppose $y^*$ is an optimizer of the $k$-th order moment relaxation \eqref{eq:relax_log}.
	If $y^*$ satisfies the flat truncation condition 
	\eqref{eq:flat_truncation} for some $t\in[d,k]$,
	then $f_{\operatorname{mom},k} = f_{\operatorname{mom}}$,
	and $y^*|_{2t}$ admits the decomposition in \eqref{eq:ft_decomp} 
	for $r=\rank\, M_t[y^*]$ distinct points $v_1^*,\ldots, v_r^*\in K$.
	If in addition that $p(v_1^*) =  \cdots = p(v_r^*)$, 
	then $f_{\max} = f_{\operatorname{mom}} = f_{\operatorname{mom},k}$
	and all $v_1^* \ddd v_r^*$ are global maximizers of \reff{eq:problem}.
\end{theorem}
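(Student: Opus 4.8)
The plan is to separate the two assertions and to lean on the machinery assembled just before the statement together with Proposition~\ref{prop:gap_maxmom}(ii). First I would extract the atomic decomposition: since $y^*$ satisfies the flat truncation condition \eqref{eq:flat_truncation} at some $t\in[d,k]$, the flat extension theory (recalled in the paragraph preceding the theorem and cited as \cite[Theorem~2.7.7]{MPO}) produces the unique $r$-atomic $K$-representing measure, i.e. the decomposition \eqref{eq:ft_decomp} with $r=\rank\,M_t[y^*]$, positive weights $\lambda_1,\dots,\lambda_r$, and distinct $v_1^*,\dots,v_r^*\in K$. Matching the zeroth moment in \eqref{eq:ft_decomp} against $y^*_0=1$ records that $\lambda_1+\cdots+\lambda_r=1$, which I will reuse below, and truncating \eqref{eq:ft_decomp} to degree $2d$ (legitimate since $t\ge d$) gives $y^*|_{2d}=\sum_{j=1}^r\lambda_j[v_j^*]_{2d}\in\mathscr{R}_d(K)$.

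Next I would prove $f_{\operatorname{mom},k}=f_{\operatorname{mom}}$. The inequality $f_{\operatorname{mom}}\le f_{\operatorname{mom},k}$ is already known. For the reverse, the degree choice \eqref{eq:d0d} ensures $p_i\in\re[x]_{2d}$, so $\langle p_i,y^*\rangle$ only involves moments up to degree $2d$ and therefore $\langle p_i,y^*\rangle=\langle p_i,y^*|_{2d}\rangle$ for every $i$. Since $y^*_0=1$ and $y^*|_{2d}\in\mathscr{R}_d(K)$, the truncation $y^*|_{2d}$ is feasible for the convex relaxation \eqref{eq:relax_meas} and attains there the value $\sum_{i}a_i\log\langle p_i,y^*\rangle=f_{\operatorname{mom},k}$; hence $f_{\operatorname{mom},k}\le f_{\operatorname{mom}}$, and equality holds. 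This establishes the first assertion.

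For the second assertion I would invoke Proposition~\ref{prop:gap_maxmom}(ii). Under the extra hypothesis $p(v_1^*)=\cdots=p(v_r^*)$ (i.e. $p_i(v_1^*)=\cdots=p_i(v_r^*)$ for each $i$), the point $z^*=y^*|_{2d}=\sum_{j=1}^r\lambda_j[v_j^*]_{2d}$ is a maximizer of \eqref{eq:relax_meas}, because by the previous paragraph its objective value equals $f_{\operatorname{mom}}$; as it has exactly the form demanded in condition (ii), that proposition yields $f_{\max}=f_{\operatorname{mom}}$, and combined with the first assertion, $f_{\max}=f_{\operatorname{mom}}=f_{\operatorname{mom},k}$. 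To name the maximizers, I would substitute $\sum_{j}\lambda_j p_i(v_j^*)=p_i(v_\ell^*)$ (using the equal-value hypothesis and $\sum_j\lambda_j=1$) into the objective, obtaining $f_{\operatorname{mom}}=\sum_i a_i\log p_i(v_\ell^*)=f(v_\ell^*)$ for each $\ell\in[r]$; since every $v_\ell^*\in K$ is feasible for \eqref{eq:problem} and attains the optimal value $f_{\max}$, all of $v_1^*,\dots,v_r^*$ are global maximizers.

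This proof is essentially an assembly, so I expect no deep obstacle; the real engine---the existence and uniqueness of the atomic representing measure from flat truncation---is already supplied by the cited result. The only points needing care are the degree bookkeeping behind $\langle p_i,y^*\rangle=\langle p_i,y^*|_{2d}\rangle$ and the double role of $y^*|_{2d}$ as a simultaneously feasible and optimal point of \eqref{eq:relax_meas}, both of which are routine once the decomposition is in hand.
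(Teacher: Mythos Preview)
Your proposal is correct and follows essentially the same approach as the paper: extract the atomic decomposition from flat truncation, use the degree bound $\deg(p_i)\le 2d$ to identify $\langle p_i,y^*\rangle=\langle p_i,y^*|_{2d}\rangle$ and conclude $f_{\operatorname{mom},k}=f_{\operatorname{mom}}$ from feasibility of $y^*|_{2d}$ in \eqref{eq:relax_meas}, then invoke Proposition~\ref{prop:gap_maxmom}(ii) and the substitution $\sum_j\lambda_j p_i(v_j^*)=p_i(v_\ell^*)$ to obtain $f_{\max}=f(v_\ell^*)$. The only cosmetic difference is ordering---the paper states feasibility of $z^*=y^*|_{2d}$ first and records the decomposition afterward---but the logic is the same.
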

\begin{proof}
	Under the flat truncation condition, 
	$z^* = y^*|_{2d}$ is a feasible point of \eqref{eq:relax_meas}.
	Since each $\deg(p_i) \le 2d$, we have
	\begin{equation}
    	f_{\operatorname{mom,k}} = \sum_{i=1}^m a_i \log \langle p_i,y^*\rangle 
    		= \sum_{i=1}^m a_i \log \langle p_i,z^*\rangle 
    		\leq f_{\operatorname{mom}}.
	\end{equation}
	Combined the above inequality with 
	$f_{\operatorname{mom}} \le f_{\operatorname{mom,k}}$, 
	we conclude that $f_{\operatorname{mom}} = f_{\operatorname{mom,k}}$.
	The decomposition \eqref{eq:ft_decomp} is implied by the 
	flat truncation condition \cite[Theorem~2.7.7]{MPO}.
	If $p(v_1^*) = \cdots = p(v_r^*)$,
	then $f_{\max} = f_{\operatorname{mom}}$ by Proposition~\ref{prop:gap_maxmom}.
	Consequently, $f_{\max} = f_{\operatorname{mom}} = f_{\operatorname{mom,k}}$.
	To show that $v_l^*$ for each $l\in[r]$ is a global maximizer, 
	it suffices to show $f(v_l^*) = f_{\operatorname{mom,k}} = f_{\max}$.
	Indeed, we have
	\begin{equation*}
    	f_{\operatorname{mom,k}} 
    	= \sum_{i=1}^m a_i \log \Big( \sum_{j=1}^r \lambda_j p_i(v_j^*) \Big) 
   	 	= \sum_{i=1}^m a_i \log \big( p_i(v_l^*) \big) = f(v_l^*)
	\end{equation*}
	for every $l\in[r]$. It follows that $v_1^*, \ldots, v_r^*$ 
	are all global maximizers of \eqref{eq:problem}.
\end{proof}

Notice that $\rank\, M_d[y^*] = 1$ if and only if $y^*|_{2d} = [v^*]_{2d}$ 
with $v^* = (y^*_{e_1}\ddd y^*_{e_n})$. 
For this special case, all conditions in Theorem~\ref{tm:parallel_all} 
are satisfied automatically.

\begin{cor}\label{cor:rank1}
	Suppose $y^*$ is an optimizer of the $k$-th order moment relaxation\eqref{eq:relax_log}.
	If $\rank\, M_d[y^*] = 1$, then $f_{\max} = f_{\operatorname{mom,k}}$
	and $v^* = (y^*_{e_1}\ddd y^*_{e_n})$ is a global maximizer of \eqref{eq:problem}.
\end{cor}

We remark that sufficiency of the flat truncation condition for log-polynomial optimization
is quite different from that for standard polynomial optimization.
Suppose $y^*$ is an optimizer of the $k$-th order moment relaxation \eqref{eq:relax_log}.
If it satisfies the flat truncation condition \eqref{eq:flat_truncation}, 
then \eqref{eq:relax_log} is a tight relaxation of \eqref{eq:relax_meas}, 
which implies that $f_{\operatorname{mom}} = f_{\operatorname{mom,k}}$.
However, the convex relaxation \eqref{eq:relax_meas} 
may not be a tight relaxation of the moment reformulation \eqref{eq:momequiv},
as shown in Example~\ref{ex:gap}.
Consequently, even if $y^*$ admits the decomposition \eqref{eq:ft_decomp} 
with candidate solutions $v_1^*,\ldots, v_r^*\in K$, 
it remains possible that $f_{\max} < f_{\operatorname{mom}} = f_{\operatorname{mom,k}}$.
Interestingly, in such scenarios, the points $v_i^*$ may or may not be global maximizer(s) 
of \eqref{eq:problem}. 
We illustrate both possibilities in the following example.

\begin{example}\label{ex:counter}
Consider the log-polynomial optimization problem:
\begin{equation}\label{eq:counter}
\left\{\begin{array}{cl}
 \max\limits_{x\in\re} & f(x)\coloneqq \log((x-1)^2+a) + \log((x+1)^2+a)\\
 \st  & x(x-1)(x+1) = 0,
\end{array}\right.
\end{equation}
where $a>0$ is a parameter. The feasible set $K = \{1, -1, 0\}$
and the degree bound $d=2$.
It is easy to evaluate
\[ f(1) = \log (a(4+a)), \quad f(-1) = \log (a(4+a)),\quad  f(0) = 2\log (1+a). \]
Consider the moment relaxation \eqref{eq:relax_log} with 
the relaxation order $k=3$.
The equality constraint in \eqref{eq:counter} is equivalent to
$x^3-x=0$, which induces linear equality constraints 
$y_5 = y_3 = y_1$ and $y_6 = y_4 = y_2$ in the moment relaxation. 
By eliminating variables, 
the $3$rd-order moment relaxation of \eqref{eq:counter} can be simplified to 
\begin{equation}\label{eq:relax_counter}
\left\{\begin{array}{cl}
    \displaystyle \max_{(y_1,y_2)} &
    \log(y_2-2y_1+1+a ) +  \log(y_2+2y_1+1+a )   \\
   \st &
   y_1,y_2\in\re,\,\left[ \begin{array}{cccc}
   1 & y_1 & y_2 & y_1 \\
   y_1 & y_2 & y_1 & y_2\\
   y_2 & y_1 & y_2 & y_1\\
   y_1 & y_2 & y_1 & y_2\\
   \end{array}
   \right] \succeq 0.
\end{array}\right.
\end{equation}
In the above, the objective function equals $\log((y_2^2+1+a)^2-4y_1^2)$ 
and the positive semidefinite constraint implies that 
$0\le y_1^2 \le y_2^2 \le y_2 \le 1.$
Thus, for all feasible point of \eqref{eq:relax_counter},
it is satisfied that
\[
\log((y_2^2+1+a)^2-4y_1^2)\le \log((1+1+a)^2-4(0)) = 2\log(a+2).
\]
In particular, the above equality holds at $y_1^* = 1$ and $y_2^* = 0$.
Hence the optimal value and optimizer of the 
$3$rd-order moment relaxation of \eqref{eq:counter} are
\[
f_{\operatorname{mom,3}} = 2\log(a+2),\quad
y^* = (1,0,1,0,1,0,1),
\] 
where $y^* = 0.5[u_1]_6 + 0.5[u_2]_6$ is supported 
on feasible points $u_1 = 1$, $u_2 = -1$.

(i) When $a = 1$, we have
\[
	f(1) = f(-1) = \log(5)>f(0) = \log(4).
\]
In this case, $f_{\max} = \log(5)$ and 
both $u_1 = 1$ and $u_2 = -1$ are maximizers of (\ref{eq:counter}).
However, $f_{\operatorname{mom,3}} = \log(9)$, 
which is strictly bigger than $f_{\max}$.
Indeed, one can check that 
$f_{\operatorname{mom}} = f_{\operatorname{mom,k}} = \log(9)$ 
for all $k\ge 3$, indicating a nonzero gap between the moment relaxation hierarchy 
and the original log-polynomial optimization problem.

(ii) When $a = 0.1$, we have
\[
	f(1) =  f(-1) = \log (0.41)< f(0) = \log (1.21).
\]
This implies that $f_{\max} = \log(1.21)$,
which is achieved at the unique optimizer $u_3 = 0$.
In this case, $f_{\operatorname{mom,3}} = \log (4.41)>f_{\max}$ 
and $u_1,u_2$ are no longer global maximizers of \eqref{eq:counter}.
It can be further verified that the relaxation \eqref{eq:relax_log} 
remains non-tight for all $k\ge 3$.
\end{example}

Proposition~\ref{prop:gap_maxmom} gives two sufficient conditions for 
$f_{\max} = f_{\operatorname{mom}}$,
both of which require numerical verification.
Recall that moment relaxations for convex polynomial optimization
always exhibit finite convergence.
This motivates us to study log-polynomial optimization defined with
concave/convex polynomials.

\subsection{Concave log-polynomial optimization}

The problem \eqref{eq:problem} is called a {\it concave} log-polynomial 
optimization problem if $K$ is a convex set and 
each $p_i$ is concave on $K$, i.e., 
for all $u,v\in K$, $i\in[m]$ and $\lambda\in[0,1]$,
\[
	\lambda u+(1-\lambda)v\in K,\quad
	p_i(\lambda u+(1-\lambda)v)\ge  \lambda p_i(u)+(1-\lambda)p_i(v).
\]
Concave log-polynomial optimization has tight moment relaxations 
under flat truncation conditions.

\begin{theorem}   \label{thm:concave}
Assume \eqref{eq:problem} is concave and 
$y^*$ is an optimizer of its $k$-th order moment relaxation \reff{eq:relax_log}.
If $y^*$ satisfies the flat truncation condition \reff{eq:flat_truncation},
then $f_{\max} = f_{\operatorname{mom}} = f_{\operatorname{mom,k}}$ 
and $v^* = (y^*_{e_1}\ddd y^*_{e_n})$
is a global maximizer of \reff{eq:problem}.
\end{theorem}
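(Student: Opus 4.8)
The plan is to combine the atomic decomposition guaranteed by flat truncation with the concavity structure, producing a single feasible point of \eqref{eq:problem} whose objective already attains the relaxation value. First I would invoke Theorem~\ref{tm:parallel_all}: since $y^*$ satisfies \eqref{eq:flat_truncation}, we immediately obtain $f_{\operatorname{mom},k}=f_{\operatorname{mom}}$ together with the representation
\[
y^*|_{2d}=\sum_{j=1}^r \lambda_j[v_j^*]_{2d},\qquad v_j^*\in K,\ \lambda_j>0,\ \sum_{j=1}^r\lambda_j=1,
\]
where the weights sum to one because $y_0^*=1$. The conceptual point is that, unlike the general situation, I would \emph{not} attempt to verify condition (ii) of Proposition~\ref{prop:gap_maxmom} (namely that $p(v_1^*)=\cdots=p(v_r^*)$), which typically fails; instead the concavity lets me collapse all atoms into their barycenter.

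Next I would set $v^*\coloneqq\sum_{j=1}^r\lambda_j v_j^*$ and note that this is precisely the first-order moment vector $(y^*_{e_1}\ddd y^*_{e_n})$. Since $K$ is convex and $v^*$ is a convex combination of points $v_j^*\in K$, we have $v^*\in K$, so $v^*$ is feasible for \eqref{eq:problem}. The heart of the argument is then a single Jensen-type inequality: because each $p_i$ is concave on $K$,
\[
p_i(v^*)=p_i\Big(\sum_{j=1}^r\lambda_j v_j^*\Big)\ \ge\ \sum_{j=1}^r\lambda_j p_i(v_j^*)=\langle p_i,y^*\rangle,
\]
where the last equality uses $\deg p_i\le 2d$. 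Since each $a_i>0$ and $\log$ is increasing (and $\langle p_i,y^*\rangle>0$ at the optimizer, as the existence of $\hat{x}\in K$ with $p_i(\hat{x})>0$ rules out a $-\infty$ objective), applying $a_i\log(\cdot)$ and summing over $i$ yields $f(v^*)\ge f_{\operatorname{mom},k}$.

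Finally I would close the loop with the standing inequalities, using $v^*\in K$ for the first and the relaxation chain for the rest:
\[
f_{\operatorname{mom},k}\ \le\ f(v^*)\ \le\ f_{\max}\ \le\ f_{\operatorname{mom}}\ \le\ f_{\operatorname{mom},k}.
\]
This forces all four quantities to coincide, so $f_{\max}=f_{\operatorname{mom}}=f_{\operatorname{mom},k}=f(v^*)$ and $v^*$ is a global maximizer. The only genuine obstacle is recognizing the right strategy: concavity should be exploited by \emph{aggregating} the atoms at their barycenter $v^*$, where concavity of each $p_i$ and monotonicity of $\log$ both push in our favor, rather than by checking the pointwise equality condition of Proposition~\ref{prop:gap_maxmom}. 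Once this aggregation idea is in place, every estimate is routine.
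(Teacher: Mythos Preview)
Your proposal is correct and follows essentially the same route as the paper's own proof: extract the atomic decomposition from flat truncation, form the barycenter $v^*=\sum_j\lambda_j v_j^*\in K$, apply concavity of each $p_i$ to obtain $\langle p_i,y^*\rangle\le p_i(v^*)$, and close the chain $f_{\operatorname{mom},k}\le f(v^*)\le f_{\max}\le f_{\operatorname{mom},k}$. The only cosmetic difference is that you route the equality $f_{\operatorname{mom},k}=f_{\operatorname{mom}}$ through Theorem~\ref{tm:parallel_all}, whereas the paper obtains it implicitly from $f_{\max}\le f_{\operatorname{mom}}\le f_{\operatorname{mom},k}$ once $f_{\max}=f_{\operatorname{mom},k}$ is established.
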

\begin{proof}
Suppose $y^*$ satisfies the flat truncation condition \eqref{eq:flat_truncation}
for some $t\in [d,k]$. Let $r = \rank\,M_r[y^*]$. 
There exist positive scalars $\lambda_i$ and distinct points $v_i^*\in K$
for $i\in[r]$ such that 
\[
	y^*|_{2d} = \lambda_1[v_1^*]_{2d}+\cdots +\lambda_r[v_r^*]_{2d}.
\]
Since $y_0^* = 1$ and \eqref{eq:problem} is a 
concave log-polynomial optimization problem, 
we have $\lambda_1+\cdots+\lambda_r = 1$ and 
$v^* = (y_{e_1}^*, \ldots, y_{e_n}^*) 
= \lambda_1v_1^*+\cdots+\lambda_rv_r^*\in K.$
This implies
\[
	\langle p_i,y^*\rangle 
	= \sum_{j=1}^r \lambda_j \langle p_i, [v_j^*]_{2d}\rangle 
	= \sum_{j=1}^r \lambda_j p_i(v_j^*) \le p_i(v^*)
\]
for all $i\in[m]$.
By assumption, $y^*$ is an optimizer of \eqref{eq:relax_log}.
Since the logarithm function is monotonically increasing, 
we further have
\[
    f_{\operatorname{mom,k}}  
    = \sum\limits_{i=1}^m a_i \log \langle p_i, y^* \rangle  
    \le \sum\limits_{i=1}^m a_i \log p_i(v^*)
    \le f_{\max}.
\]
This implies that $f_{\max} = f_{\operatorname{mom,k}}$ 
and $v^*$ is a maximizer of \reff{eq:problem}.
\end{proof}

We remark that Theorem~\ref{tm:parallel_all} and 
Theorem~\ref{thm:concave} describe two different classes of 
log-polynomial optimization problems that yield tight moment relaxation, 
while both require flat truncation conditions.
Specifically, Theorem~\ref{tm:parallel_all} provides conditions 
under which all points $v_1^*, \ldots, v_r^*$ 
in the support of the atomic measure admitted by $y^*|_{2d}$ 
are maximizers of \eqref{eq:problem}.
In contrast, Theorem~\ref{thm:concave} focuses on concave log-polynomial optimization, 
where these $v_1^*, \ldots, v_r^*$ are not necessarily 
global maximizers of \eqref{eq:problem}.

However, determining the convexity or concavity of polynomials 
with degree four or higher is NP-hard \cite{AAA13}. 
In computational practice, SOS-convex/concave polynomials, 
which form a subset of convex/concave polynomials, 
are of great interest since they can be verified by
solving a semidefinite program \cite[Lemma~7.1.3]{MPO}.
A polynomial $q(x)$ is said to be {\it SOS-convex} 
if there exists a matrix polynomial $R(x)$ such that its Hessian matrix $\nabla^2 q = R^TR$,
and it is said to be {\it SOS-concave} if $-q$ is SOS-convex.
We then show that $f_{\max} = f_{\operatorname{mom,k}}$ for every $k\ge d$
if \eqref{eq:problem} is defined by SOS-concave polynomials.

\begin{theorem}\label{thm:sosconcave}
Assume $c_j$ are SOS-concave for all $j\in\mc{I}$, 
$c_{\mc{E}}$ is a linear polynomial tuple and 
each $p_i(x)$ for $i\in[m]$ are SOS-concave for all $x\in K$.
Then $f_{\operatorname{mom,k}} = f_{\max}$ for all $k\ge d$.
In addition, if \reff{eq:relax_log} has an optimizer $y^*$,
then $v^* = (y^*_{e_1}\ddd y^*_{e_n})$ is a global maximizer of \eqref{eq:problem}.
\end{theorem}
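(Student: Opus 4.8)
The plan is to reduce the whole statement to a single moment-level Jensen inequality for SOS-convex polynomials, which converts each concavity hypothesis into a pointwise inequality at the first-moment point. The tool I would set up is: if $q\in\re[x]$ is SOS-convex with $\deg q\le 2k$, and $y\in\re^{\N_{2k}^n}$ satisfies $y_0=1$ and $M_k[y]\succeq 0$, then
\[
\langle q, y\rangle \ \ge\ q(v), \qquad v\coloneqq (y_{e_1}\ddd y_{e_n}).
\]
I would prove this via the tangent-plane square-sum certificate for SOS-convex polynomials (see \cite{MPO}): SOS-convexity of $q$ yields that $q(x)-q(v)-\nabla q(v)^T(x-v)$ is a sum of squares in $x$. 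Pairing this with the functional $\langle\,\cdot\,,y\rangle$, the affine term drops out because $\langle x_l,y\rangle=y_{e_l}=v_l$, while each squared factor $g$ in the certificate contributes $\langle g^2,y\rangle=\operatorname{vec}(g)^T M_k[y]\operatorname{vec}(g)\ge 0$, where the degree bound $\deg q\le 2k$ keeps $\deg g\le k$ so that $M_k[y]\succeq 0$ applies. This gives $\langle q,y\rangle\ge q(v)$; applying it to $-p$ for an SOS-concave $p$ yields the reverse inequality $\langle p,y\rangle\le p(v)$.

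First I would show that the first-moment point $v$ of an \emph{arbitrary} feasible $y$ of \eqref{eq:relax_log} lies in $K$. For $j\in\mc{E}$ the polynomial $c_j$ is linear, so $\langle c_j,y\rangle=c_j(v)$, and the vanishing $(0,0)$-entry of $L_{c_j}^{(k)}[y]=0$ forces $c_j(v)=0$. For $j\in\mc{I}$, SOS-concavity of $c_j$ gives $c_j(v)\ge\langle c_j,y\rangle$ by the reverse inequality above, while the $(0,0)$-entry of $L_{c_j}^{(k)}[y]\succeq 0$ is exactly $\langle c_j,y\rangle\ge 0$; chaining these yields $c_j(v)\ge 0$. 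Hence $v\in K$.

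Next I would bound the objective. It suffices to consider feasible $y$ with finite objective, so $\langle p_i,y\rangle>0$ for every $i$; the reverse Jensen inequality applied to each SOS-concave $p_i$ then gives $0<\langle p_i,y\rangle\le p_i(v)$, so in particular $p_i(v)>0$ and $f(v)$ is well defined. Monotonicity of $\log$ and $a_i>0$ give
\[
\sum_{i=1}^m a_i\log\langle p_i,y\rangle \ \le\ \sum_{i=1}^m a_i\log p_i(v) \ =\ f(v) \ \le\ f_{\max},
\]
the last step because $v\in K$. Taking the supremum over feasible $y$ yields $f_{\operatorname{mom},k}\le f_{\max}$, and together with the standard relaxation bound $f_{\max}\le f_{\operatorname{mom},k}$ this gives equality for every $k\ge d$; note that this argument never assumes the supremum is attained. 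Finally, if an optimizer $y^*$ exists, then $v^*=(y^*_{e_1}\ddd y^*_{e_n})\in K$ and $f(v^*)\ge\sum_{i=1}^m a_i\log\langle p_i,y^*\rangle=f_{\operatorname{mom},k}=f_{\max}$, while $v^*\in K$ forces $f(v^*)\le f_{\max}$; hence $f(v^*)=f_{\max}$ and $v^*$ is a global maximizer of \eqref{eq:problem}.

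I expect the main obstacle to be the correct formulation and invocation of the moment-level Jensen inequality—specifically, justifying the tangent square-sum certificate for SOS-convex polynomials and tracking degrees so that the certificate's squared factors stay within the cone that $M_k[y]\succeq 0$ certifies as nonnegative—rather than the final chaining of inequalities, which is routine once the Jensen step is available. A secondary point needing care is the bookkeeping around the domain of $\log$, ensuring $\langle p_i,y\rangle>0$ so that both sides of the Jensen bound lie where $\log$ is defined and increasing.
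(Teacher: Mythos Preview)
Your proposal is correct and takes essentially the same approach as the paper: both invoke the moment-level Jensen inequality for SOS-concave polynomials (the paper cites \cite[Theorem~7.1.6]{MPO}, whereas you spell out the tangent-plane SOS certificate), verify that the first-moment point of any feasible $y$ lies in $K$, and then chain the resulting bounds through the monotone logarithm to get $f_{\operatorname{mom},k}\le f_{\max}$. Your direct argument for the optimizer conclusion is in fact slightly cleaner than the paper's appeal to Theorem~\ref{thm:concave}, which nominally carries a flat-truncation hypothesis that is unnecessary here.
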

\begin{proof}
Given $k\ge d$, let $y$ be a feasible point of \eqref{eq:relax_log}
and denote $\pi(y)\coloneqq (y_{e_1}\ddd y_{e_n})$.
For every $j\in \mc{I}$, $\langle c_j,y\rangle \ge 0$ 
since $L_{c_j}^{(k)}[y]\succeq 0$.
For every $j\in \mc{E}$, since $c_j$ is linear, 
we have $c_j(\pi(y)) = 0$ as it corresponds to the 
$(1,1)$-th entry of $L_{c_j}^{(k)}[y]$.
Since $M_d[y]\succeq 0$ and $y_0=1$, 
the SOS-concavity and Jensen's inequality \cite[Theorem~7.1.6]{MPO} imply
\[ 
	\langle p_i,y\rangle \le p_i(\pi(y)),\quad 0
	\le \langle c_j,y\rangle \le c_j(\pi(y)), 
\]
for all $i\in[m]$ and $j\in \mc{I}$. 
Thus, we have $\pi(y)\in K$ and
\begin{equation}\label{eq:jensen}
    \sum\limits_{i=1}^m a_i \log \langle p_i,y\rangle  
    \le  \sum\limits_{i=1}^m a_i \log p_i(\pi(y)) 
    \leq  f_{\max}.
\end{equation}
Note that the relaxation order $k$ and the 
feasible point $y$ can be chosen arbitrarily.
By maximizing \eqref{eq:jensen} with respect to $y$ 
over the feasible set of \eqref{eq:relax_log}, 
we obtain $f_{\operatorname{mom,k}} \le f_{\max}$, 
which implies $f_{\operatorname{mom,k}} = f_{\max}$ for all $k\ge d$.
Suppose \reff{eq:relax_log} has an optimizer $y^*$.
Then by Theorem~\ref{thm:concave}, 
$v^* = \pi(y^*)$ is a global maximizer of \reff{eq:problem}.
\end{proof}

Theorem~\ref{thm:sosconcave} offers an effective perspective for 
optimizing a product of SOS-concave functions over a convex set.
Consider a polynomial optimization problem of the form
\begin{equation}\label{eq:prod_pop}
\left\{\begin{array}{cll}
 \max\limits_{x\in\re^n} & \theta(x)\coloneqq p_1(x)p_2(x)\cdots p_{m}(x)\\
 \st & c_j(x) = 0 \,\, (j \in \mathcal{E}),\\
     & c_j(x) \geq 0 \,\, (j \in \mathcal{I}).
\end{array}\right.
\end{equation}
Let $K$ denote the feasible set of \eqref{eq:prod_pop}.
Assume $p_1\ddd p_m$ (not necessarily distinct) are SOS-concave over $K$,
each $c_{j}$ for $j\in\mc{E}$ is linear, and every $c_{j}$ for $j\in\mc{I}$ is SOS-concave.
When $m > 1$, its objective function $\theta(x)$ is typically not concave.
Consequently, the standard Moment-SOS relaxations of 
\eqref{eq:prod_pop} may not be tight at their lowest relaxation order.
If each $p_i(x)$ for $i\in[m]$ is positive over $K$, then \eqref{eq:prod_pop} can 
be equivalently solved as the log-polynomial optimization problem
\[
\max\limits_{x\in K}\, \log \theta(x) = \sum\limits_{i=1}^m \log p_i(x).
\]
By Theorem~\ref{thm:sosconcave}, this reformulated problem has 
a tight moment relaxation at its lowest relaxation order.

\section{Tighter relaxations with LMEs}
\label{sec:LME}

In this section, we introduce a technique called Lagrange multiplier expressions
to construct a kind of tighter moment relaxations of \eqref{eq:problem}
compared to \eqref{eq:relax_log}.

For convenience, suppose that $\mc{E}\cup \mc{I} = [\ell] \coloneq \{1,\ldots, \ell\}$.
Let $f(x)$ be the objective function and $x^*$ the maximizer of \eqref{eq:problem}.
Under some certain constraint qualification conditions, 
there exists a vector of Lagrange multipliers 
$\lambda = (\lambda_j)$ for $j = 1\ddd \ell$, 
such that $(x^*,\lmd)\in\mc{K}$, where
\begin{equation}    \label{eq:KKT}
 \mathcal{K} \coloneqq 
 \left\{ (x, \lambda)\in K\times \re^{\ell}
\left|\begin{array}{c}
\nabla f(x) + \sum\limits_{j \in \mathcal{E} \cup \mathcal{I}} \lambda_j \nabla c_j(x) = 0,\\
\lambda_j\ge 0,\, \lambda_j c_j(x) \ge 0\, (j \in \mathcal{I})
\end{array}\right.\right\}.
\end{equation}
The constraints in \eqref{eq:KKT} are called the 
Karush-Kuch-Tucker (KKT) optimality conditions.
Each $(x, \lambda) \in \mathcal{K}$ is called a critical pair, 
where $x$ alone is called a critical point.
Under some constraint qualifications, 
all maximizers of the log-polynomial optimization \eqref{eq:problem} are critical points.
Suppose there exists a polynomial or rational tuple 
$\tau = (\tau_1,\ldots, \tau_{\ell})$ such that 
\begin{equation}\label{eq:lme}
\lambda_j = \tau_j(x)\quad \mbox{for all}\quad (x,\lambda)\in \mc{K}.
\end{equation}
Then we can get a parametric approximation of critical points by
replacing $\lambda$ in \eqref{eq:KKT} with $\tau$.
Such a $\tau$ is called a Lagrange multiplier expression (LME).
LMEs serve as an efficient tool for constructing tight relaxations 
in polynomial optimization \cite{Nie2019}.
Recently, they have been applied to challenging optimization problems 
given by polynomials, including bilevel optimization, 
generalized Nash equilibrium problems and variational inequalities
\cite{Choi2024,NieSunTangZ,nie2023convex,NieTangZgnep21,NieWangZBilevel21}.

\begin{example}\label{ex:lmes}
We present explicit LMEs for box, simplex and ball constraints.

 \begin{enumerate}

\item[(i)] If $K=\{x\in\re^n: a_j \leq x_j \leq b_j,\, j\in[n]\}$, 
then $\ell = 2n$ and \eqref{eq:lme} is satisfied for 
\(\tau = (\tau_1,\ldots,\tau_{2n})\) with
\begin{equation*}
    \tau_{2j-1} = \frac{\nabla_{x_j} f (x)( x_j - b_j)}{b_j - a_j},\,\, 
    \tau_{2j} = \frac{\nabla_{x_j} f (x)(x_j - a_j )}{b_j - a_j}\quad 
    \forall j\in[n].
\end{equation*}

\item[(ii)] If $K=\{x\in\re^n: 1 - \mathbf{1}^T x \ge 0,x \geq 0\}$,
then $\ell = n+1$ and \eqref{eq:lme} is satisfied for 
\[
	\tau = x^T\nabla f(x)\mathbf{1} - (0,\, \nabla_{x_1} f(x),
		\, \ldots,\, \nabla_{x_n} f(x)),
\]
where $\mathbf{1}\in\re^{n}$ is the vector of all ones.

\item[(iii)] If $K = \{x\in\re^n: \|x-a\| \leq b\}$, 
then $\ell=1$ and \eqref{eq:lme} is satisfied for
\begin{equation*}
    \tau = \frac{(x - a)^T \nabla f(x)}{2b^2}.
\end{equation*}

\end{enumerate}
\end{example}

Next, we introduce how to apply LMEs to construct tighter
moment relaxations for \eqref{eq:problem}.
Consider the log-polynomial optimization problem (\ref{eq:problem})
with box, simplex or ball constraints.
Then, polynomial LMEs $\tau_j(x)$ for $j\in\mc{E}\cup \mc{U}$ are 
explicitly presented in Example~\ref{ex:lmes}. Recall that 
\[
f(x) = \sum\limits_{i=1}^m a_i \log p_i(x),\quad
\nabla f(x) = \sum\limits_{i=1}^m \frac{a_i}{p_i(x)}\nabla p_i(x).
\]
For convenience, denote $p^{\mathbf{1}} = p_1p_2\cdots p_m$ and
\[
f_i(x) \coloneqq \frac{p^{\mathbf{1}}(x)}{p_i(x)},\quad \forall i\in[m].
\]
Assume $p>0$ over $K$. 
Then a feasible point $x\in K$ is a critical point of \eqref{eq:problem} 
if and only if it satisfies 
\[
	\left\{\begin{array}{l}
	\sum\limits_{i=1}^m a_i f_i(x)\nabla p_i(x) 
		+ p^{\mathbf{1}}(x)\sum\limits_{j\in \mc{E}\cup \mc{I}} 
		\tau_j(x)\nabla c_j(x) = 0,\\
	\tau_j(x)\ge 0\,(j\in\mc{I}),\quad  \tau_j(x)c_j(x)=0\, (j\in\mc{I}).
	\end{array}\right.
\]
This polynomial system provides strengthening constraints for \eqref{eq:problem}.
For convenience, denote the polynomial tuples
\[	\begin{array}{l}
	\Phi\coloneq \{ c_{\mc{E}} \} \cup \Big\{ \sum\limits_{i=1}^m 
		a_i f_i(x)\nabla p_i(x) + p^{\mathbf{1}}(x)\sum\limits_{j\in \mc{E}\cup \mc{I}} 
		\tau_j(x)\nabla c_j(x) \Big\} \cup 
	\{\tau_jc_j: j\in \mc{I}\},  \\
	\Psi\coloneq \{ c_{\mc{I}} \} \cup \{ \tau_j: j\in \mc{I} \}.
	\end{array}
\]
For a vector $\phi$ of polynomials, 
$\{\phi\}$ denotes the polynomial set of entries of $\phi$.
We can equivalently reformulate \eqref{eq:problem} as
\begin{equation}\label{eq:problemhatLME}
\left\{\begin{array}{cl}
    \max\limits_{x\in\re^n} 
    & \sum\limits_{i=1}^m a_i \log p_i(x)\\
    \st
    & \varphi(x) = 0 \, (\varphi\in\Phi),\\
    & \psi(x)\ge 0 \, (\psi\in\Psi).
\end{array}\right.
\end{equation}
It is worth noting that the feasible set of \eqref{eq:problemhatLME},
denoted as
\[
	K_1 \coloneqq \{x\in\re^n: \varphi(x)=0\,(\varphi\in \Phi),\, 
	\psi(x)\ge 0\,(\psi\in \Psi)\},
\]
is usually a proper subset of $K$. 
The problem \eqref{eq:problemhatLME} is also 
a log-polynomial optimization problem.
Thus, one gets its convex relaxation similar to (\ref{eq:relax_meas}):
\begin{equation}  \label{eq:relax_meas_lme}
	\left\{\begin{array}{rl}
	f_{\operatorname{lme}} \coloneqq \max\limits_{z} 
	&  \sum\limits_{i=1}^m a_i \log \langle p_i,z\rangle   \\
	\st & z_0 = 1,\, z  \in  \mathscr{R}_d(K_1) .
	\end{array}\right.
\end{equation}
Let $f_{\operatorname{lme}}$ denote the optimal value of \eqref{eq:relax_lme}.
Since $K_1\subseteq K$, we must have 
$f_{\operatorname{lme}} \le f_{\operatorname{mom}}$,
where $f_{\operatorname{mom}}$ is the optimal value of the 
standard convex relaxation \eqref{eq:relax_meas}.
Moreover, its moment relaxation hierarchy can be built similarly 
to \eqref{eq:relax_log}. Let 
\[
	d_1 \coloneqq \max\{ \lceil \deg(q)\rceil: q\in \Phi\cup \Psi\}.
\]
The $k$-th order (for each $k\ge d_1$) moment relaxation 
of \eqref{eq:relax_meas_lme} is
\begin{equation}\label{eq:relax_lme}
	\left\{\begin{array}{rl}
   f_{\operatorname{lme},k} \coloneqq \max\limits_y 
   & \sum\limits_{i=1}^m a_i \log \langle p_i, y \rangle \\
   \st 
   & y_0 = 1,\, y \in \mathbb{R}^{\mathbb{N}_{2k}^n},\, 
   		M_k [y] \succeq 0,\\
   & L_{\varphi}^{(k)}[y] = 0\,(\varphi\in\Phi),\\ 
   & L_{\psi}^{(k)}[y] \succeq 0 \,(\psi  \in \Psi).
	\end{array}\right.
\end{equation}
Let $f_{\operatorname{lme},k}$ denote the optimal value of \eqref{eq:relax_lme}.
Since all constraints of \eqref{eq:relax_log} are included in \eqref{eq:relax_lme}, 
the feasible set of \eqref{eq:relax_log} is a subset of that for \eqref{eq:relax_lme}.
Thus, one has $f_{\operatorname{lme},k}\le f_{\operatorname{mom},k}$ 
for all $k\ge d_1$.

The tighter relaxation can be similarly constructed
for more general cases.
Suppose the linear independence constraint qualification condition (LICQC) 
holds for all $x\in K$ and all $p_i$ are positive on $K$.
Then there exist rational LMEs with denominators strictly positive on $K$, 
by Proposition~3.6 in \cite{nie2023convex}. 
In summary, we have the following result.
\begin{prop}\label{prop:rel}
Suppose there exist rational LMEs $\tau = (\tau_1,\ldots, \tau_l)$ that
satisfies \eqref{eq:lme} with denominators strictly positive on $K$. 
Then convex problems \eqref{eq:relax_meas_lme} and \eqref{eq:relax_lme} are relaxations of 
\eqref{eq:problem}, whose optimal values satisfy
\[
\begin{array}{ccccc}
f_{\operatorname{mom},k} & \ge & f_{\operatorname{mom}} & \ge & f_{\operatorname{max}} \\
\rotatebox[origin=c]{-90}{$\ge$} & & \rotatebox[origin=c]{-90}{$\ge$} & &  \\
f_{\operatorname{lme},k} & \ge & f_{\operatorname{lme}} & \ge & f_{\operatorname{max}}.
\end{array}
\]
\end{prop}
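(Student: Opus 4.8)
The plan is to establish the claimed array of inequalities by chaining together three kinds of relationships: the relaxation orderings already proven in the excerpt, the new observation that $K_1 \subseteq K$, and the fact that the LME-strengthened problem \eqref{eq:problemhatLME} is an \emph{equivalent} reformulation of the original problem \eqref{eq:problem}. First I would invoke the hypothesis that rational LMEs with strictly positive denominators exist (guaranteed under LICQC and $p_i>0$ on $K$ by Proposition~3.6 of \cite{nie2023convex}). This ensures that every maximizer $x^*$ of \eqref{eq:problem} is a critical point admitting Lagrange multipliers expressible as $\lambda_j = \tau_j(x^*)$, so $x^*$ satisfies every constraint defining $K_1$; hence $x^* \in K_1$ and the global optimal value of \eqref{eq:problemhatLME} over $K_1$ equals $f_{\max}$. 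This justifies the rightmost inequalities $f_{\operatorname{mom}} \ge f_{\max}$ and $f_{\operatorname{lme}} \ge f_{\max}$, the latter because \eqref{eq:relax_meas_lme} is a convex relaxation of the equivalent problem \eqref{eq:problemhatLME} in exactly the manner that \eqref{eq:relax_meas} relaxes \eqref{eq:problem}.

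Next I would assemble the two horizontal chains. The top row $f_{\operatorname{mom},k} \ge f_{\operatorname{mom}} \ge f_{\max}$ is essentially already recorded in the excerpt: the monotone ordering $f_{\operatorname{mom}} \le f_{\operatorname{mom},k}$ follows from the discussion preceding Theorem~\ref{thm:asymp}, and $f_{\operatorname{mom}} \ge f_{\max}$ is the standard relaxation inequality noted after \eqref{eq:relax_meas}. The bottom row $f_{\operatorname{lme},k} \ge f_{\operatorname{lme}} \ge f_{\max}$ is the analogous statement for the LME-strengthened hierarchy; here $f_{\operatorname{lme}} \le f_{\operatorname{lme},k}$ holds because \eqref{eq:relax_lme} is a semidefinite relaxation of the convex problem \eqref{eq:relax_meas_lme}, by the same argument that produced the moment hierarchy from \eqref{eq:relax_meas}.

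Finally I would supply the two vertical inequalities. For $f_{\operatorname{mom},k} \ge f_{\operatorname{lme},k}$, I would argue as the text already suggests: every constraint of \eqref{eq:relax_log} appears among the constraints of \eqref{eq:relax_lme}, since $\Phi$ contains $c_{\mc{E}}$ and $\Psi$ contains $c_{\mc{I}}$ together with the additional LME-derived polynomials. Therefore the feasible set of \eqref{eq:relax_lme} is contained in that of \eqref{eq:relax_log}, and maximizing the same objective $\sum_{i=1}^m a_i \log \langle p_i, y\rangle$ over the smaller set yields a value no larger, giving $f_{\operatorname{lme},k} \le f_{\operatorname{mom},k}$. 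For the companion inequality $f_{\operatorname{mom}} \ge f_{\operatorname{lme}}$, I would use $K_1 \subseteq K$, which forces $\mathscr{R}_d(K_1) \subseteq \mathscr{R}_d(K)$; since \eqref{eq:relax_meas} and \eqref{eq:relax_meas_lme} share the identical objective and differ only in enlarging versus shrinking the moment cone, maximizing over the larger cone gives the larger value, so $f_{\operatorname{lme}} \le f_{\operatorname{mom}}$.

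The main obstacle, and the step deserving the most care, is verifying that passing to the LME formulation does not discard any genuine global maximizer—that is, that $f_{\operatorname{lme}} \ge f_{\max}$ rather than merely $f_{\operatorname{lme}} \le f_{\operatorname{mom}}$. This hinges entirely on the constraint qualification: without LICQC one could not guarantee that the optimal $x^*$ admits multipliers matching the rational expressions $\tau_j$, and a spurious strict inequality $f_{\operatorname{lme}} < f_{\max}$ could arise if $x^*$ were lost from $K_1$. Once the existence of valid rational LMEs with positive denominators is secured, membership $x^* \in K_1$ follows by direct substitution into the KKT system, and all remaining inequalities are routine consequences of set inclusions and the monotonicity of maximization; so the proof reduces to carefully citing the LME existence result and checking that the constructed $\Phi, \Psi$ faithfully encode the optimality conditions.
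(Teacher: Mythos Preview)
Your proposal is correct and follows essentially the same reasoning the paper uses. In fact, the paper does not give an explicit proof of this proposition at all: it is stated as a summary of the preceding discussion, which already notes that $K_1\subseteq K$ (hence $\mathscr{R}_d(K_1)\subseteq\mathscr{R}_d(K)$ and $f_{\operatorname{lme}}\le f_{\operatorname{mom}}$), that the constraints of \eqref{eq:relax_log} are contained in those of \eqref{eq:relax_lme} (hence $f_{\operatorname{lme},k}\le f_{\operatorname{mom},k}$), and that \eqref{eq:problemhatLME} is an equivalent reformulation of \eqref{eq:problem} under the stated hypothesis (hence $f_{\operatorname{lme}}\ge f_{\max}$); your write-up simply makes these steps explicit and correctly flags that the equivalence relies on the maximizer being a KKT point.
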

We remark that it is possible that 
$f_{\operatorname{lme}} < f_{\operatorname{mom}}$, 
and thus, $f_{\operatorname{mom},k}> f_{\max}$ for all $k$, 
but the moment relaxation hierarchy (\ref{eq:relax_lme}) exploiting LMEs is tight.
We refer to Example~\ref{ex:44} for such an exposition.
In addition, we would like to remark that analogous conclusions in Theorems~\ref{thm:asymp},~\ref{tm:parallel_all},~\ref{thm:concave},~\ref{thm:sosconcave} 
and Corollaries~\ref{cor:asymp},~\ref{cor:rank1} 
also hold for the tighter relaxation (\ref{eq:relax_lme}). 
We omit the detailed proofs for brevity and present an example 
to better illustrate the performance difference between \eqref{eq:relax_log} 
and \eqref{eq:relax_lme} in the following exposition.

\begin{example}\label{ex:ABO}
Consider the log-polynomial optimization problem derived from the 
ABO blood group system \cite{EMBook}:
\begin{equation}\label{eq:ABO}
\left\{\begin{array}{cl}
    \max\limits_{x\in\re^3} 
    & 182 \log (x_1^2+2x_1x_3)+ 60 \log (x_2^2+2x_2x_3)\\
    &\qquad + 17 \log(2x_1x_2) + 176 \log (x_3^2)\\
    \st & 1-\mathbf{1}^Tx \ge 0,\,\, x \geq \mathbf{0},
\end{array}\right.
\end{equation}
where $\mathbf{1}\in\re^3$ is the vector of all ones and 
$\mathbf{0}\in\re^3$ is the vector of all zeros.
This optimization problem has simplex constraints, 
and its LMEs are explicitly given in Example~\ref{ex:lmes} (ii). 
We compare the numerical performance of the standard 
moment relaxation \eqref{eq:relax_log} against the tighter 
relaxation \eqref{eq:relax_lme}. 
All computations were performed using \texttt{Yalmip} \cite{Yalmip} 
with the SDP solver {\tt MOSEK} \cite{mosek}.
For the tighter moment relaxation \eqref{eq:relax_lme}, 
the flat truncation condition \eqref{eq:flat_truncation} 
holds for $t=k=3$ with $r=1$.
By Corollary~\ref{cor:rank1}, we obtain the optimal value 
and optimizer of \eqref{eq:ABO} as
\[
f_{\max} = f_{\operatorname{lme},3} = -492.5353,\quad
x^* = (0.2644, 0.0932, 0.6424).
\]
The detailed computational results are presented in Table~\ref{tab:ex:ABO}. 
\begin{table}[htb]
    \centering
    \begin{tabular}{|c|c|c|c|c|c|}
    \hline
       $k$  &   2 & 3 & 4 & 5 & 6\\
       \hline
       $f_{\operatorname{mom},k}$  &  -491.8158 &  -491.8158 &  -491.8158 &  -491.8158 &  -491.8158\\
       \hline
       $f_{\operatorname{lme},k}$ &  -492.2927 & -492.5353 & -492.5353 & -492.5353 & -492.5353\\
       \hline
    \end{tabular}
    \caption{Comparison between $f_{\operatorname{mom},k}$ and $f_{\operatorname{lme},k}$ in Example~\ref{ex:ABO}}
    \label{tab:ex:ABO}
\end{table}
It is clear that the tighter relaxation \eqref{eq:relax_lme}
is more efficient for solving the log-polynomial optimization problem \eqref{eq:ABO}
compared to the standard moment relaxation \eqref{eq:relax_log}.
\end{example}

In practice, choosing between \eqref{eq:relax_log} and \eqref{eq:relax_lme} 
for solving log-polynomial optimization can be tricky. 
Note that the degrees $d_1\ge d$, and $d_1$ depends on $m$.
Typically, we prefer the tighter relaxation \eqref{eq:relax_lme} 
when $m$ is small and \eqref{eq:problem} has simple constraints such as 
box, simplex, and ball constraints. 
This is because in such cases, the lowest relaxation orders $d,d_1$ 
are close and \eqref{eq:problem} has convenient LMEs.
Consequently, solving \eqref{eq:relax_lme} at a small relaxation order 
is expected to provide superior upper bounds compared to \eqref{eq:relax_log}.
However, as $m$ increases, the gap $d_1-d$ can grow substantially 
due to the existence of the products $p^{\bf 1}$ and $f_i$. 
In this case, solving \eqref{eq:relax_lme} can be computationally expensive, 
even at its initial relaxation order $k = d_1$. 
Therefore, for larger $m$, it is often more efficient to compute 
using the standard moment hierarchy \eqref{eq:relax_log}.

\section{Numerical experiments and applications}
\label{sec:exp}

In this section, we present explicit numerical examples 
and applications of log-polynomial optimization.
The moment relaxations \reff{eq:relax_log} are solved by the software
\texttt{Yalmip} \cite{Yalmip} with the SDP solver \texttt{MOSEK} \cite{mosek}.
The computations were implemented in \MATLAB R2024b on a laptop equipped with
a 12th Gen Intel(R) Core(TM) i7-1270P 2.20GHz CPU and 32GB RAM.
To enhance readability, the computational results are displayed 
with four decimal digits.

\subsection{Explicit numerical examples}

\begin{example}\label{ex:88}
Consider the log-polynomial optimization problem:
\begin{equation*}
\left\{\begin{array}{cl}
    \max\limits_{x \in \mathbb{R}^5} & 
    \frac{30}{218} \log p_1(x) + \frac{97}{218} \log p_2(x) + \frac{91}{218} \log p_3(x) \\
    \st & \| x \| \leq 1,
\end{array}\right.
\end{equation*}
where
\begin{equation*}
\begin{array}{rl}
   p_1(x)  & = (x_3+x_4)^2 + \left(1+2(x_1+x_5)+3(x_2+x_4) \right)^2 + 0.01, \\
   p_2(x)  & = (x_2+x_4-x_5)^2 + (2x_2+3x_5)^2 + 0.02,  \\
   p_3(x)  & = (x_1+x_3+x_4)^2 + (x_1-x_3+x_4)^2 + 0.03.
\end{array}
\end{equation*} 
It is clear that the objective function is well defined 
over the feasible set. 
It took 1.10 seconds to solve the relaxation \reff{eq:relax_log} at $k=2$. 
The corresponding moment optimizer satisfies the flat truncation 
condition \reff{eq:flat_truncation} at $t=2$ with $r=1$.
By Corollary \ref{cor:rank1}, the moment relaxation is tight.
We obtain the global optimal value
$f_{\max} = f_{\operatorname{mom,2}} = 1.6216$, 
and the global maximizer
\begin{equation*}
    x^* = (0.4717, 0.4518, 0.0036, 0.5276, 0.5432).
\end{equation*} 
\end{example}

\begin{example}\label{ex:44}
Consider the log-polynomial optimization problem:
\begin{equation}\label{eq:44}
\left\{\begin{array}{cl}
    \max\limits_{x \in \mathbb{R}^3} & \sum\limits_{i=1}^{6} a_i \log  p_i(x)  \\
    \st & 1 - \mathbf{1}^T x \geq 0,\,\, x \geq \mathbf{0},
\end{array}\right.
\end{equation}
where $a = (a_1,\ldots, a_6)$ is a given coefficient vector and
\begin{equation*}
	\begin{array}{lll}
	p_1(x) = x_1^3 + 3x_1^2x_2 + 3x_1^2 x_3, 
		& p_2(x) = 3x_1x_2^2 + 6x_1x_2x_3 , 
		& p_3(x) = 3x_1x_3^2,\\
	p_4(x) = x_2^3 + 3x_2^2x_3, 
		& p_5(x) = 3x_2x_3^2 , 
		& p_6(x) = x_3^3 ,
	\end{array}
\end{equation*}
The problem \eqref{eq:44} has simplex constraints, 
and its LMEs are explicitly presented in Example~\ref{ex:lmes} (ii).

(i) Consider the coefficient vector
\[
a=
( 0.0968,\,    0.1419,\,    0.2194,\,    0.0839,\,    0.2839 ,\,   0.1742).
\]

For the standard moment relaxation \reff{eq:relax_log}, 
the lowest relaxation order is $d=2$.
When the relaxation order $k=4$, it took around 1.77 seconds to solve 
$f_{\operatorname{mom},4} =  -1.7181$, of which
the associated moment optimizer satisfies the flat truncation condition with 
$t=3$ and $r=2$, admitting the decomposition \reff{eq:ft_decomp} with
\[	\begin{array}{ll}
	v_1^* = (0.2195,0.1929,0.5877),  & f(v_1^*) = -1.7308,\\
	v_2^* = (0.0537,0.3100,0.6363), & f(v_2^*) = -2.0655.
	\end{array}
\]
By Theorem~\ref{tm:parallel_all}, 
we have $f_{\operatorname{mom}} = f_{\operatorname{mom},4}$.
But this does not guarantee the optimality of the original problem.
Note that $f(v_1^*), f(v_2^*)$ provide lower bounds for 
the true optimal value $f_{\max}$. 
Thus, we derive
\[
 |f_{\max}-f_{\operatorname{mom}}| 
 \le |f(v_1^*)-f_{\operatorname{mom},4}|\leq 0.0127. 
\]

For the tighter moment relaxation \reff{eq:relax_lme},
the lowest relaxation oder is $d_1 = 3$.
At this lowest relaxation order  $k = 3$,
it took around 1.24 seconds to get $f_{\operatorname{lme},3} = -1.7194$, 
of which the associated moment optimizer satisfies 
the flat truncation condition with $t=3$ and $r=1$.
By Corollary \ref{cor:rank1}, the moment relaxation is tight.
We get the global optimal value and optimizer 
\begin{equation*}
	f_{\max} = f_{\operatorname{lme},3} = -1.7194,\quad
    x^* = (0.1873,0.2161,0.5967).
\end{equation*}

(ii) 
Let the coefficient vector $a$ be randomly generated by the 
\MATLAB function $\texttt{rand}$.
Table~\ref{tab:ex:44} reports the computational results for five instances. 
We use $k_{\operatorname{mom}}$ and $k_{\operatorname{lme}}$ 
to denote the relaxation orders of \eqref{eq:relax_log} 
and \eqref{eq:relax_lme}, respectively, at which flat truncation holds.
For relaxation \eqref{eq:relax_log}, 
the value $r$ is the rank of the moment matrix associated with the 
moment optimizer that satisfies the flat truncation condition.
The column `gap' lists the smallest value difference 
between $f_{\operatorname{mom}}$ and $f(v_i^*)$, 
where $v_i^*$ is any point in the support of the atomic measure admitted
by the moment optimizer of \eqref{eq:relax_log}.
For the tighter moment relaxation \eqref{eq:relax_lme}, 
its moment optimizer satisfies flat truncation with $t=3$ and $r=1$ for all five instances.
By Corollary \ref{cor:rank1}, the moment relaxation is tight, 
i.e., $f_{\max} = f_{\operatorname{lme}}$.
\begin{table}[htb]
    \centering
    \begin{tabular}{|c|c|c|c|c|c|c|c|}
       \hline
       \multirow{2}{*}{Instance} & \multicolumn{4}{c}{Relaxation~\eqref{eq:relax_log}} & \multicolumn{2}{|c|}{Relaxation~\eqref{eq:relax_lme}}\\
       \cline{2-7} 
       & $k_{\operatorname{mom}}$ & $r$& $f_{\operatorname{mom}}$ & gap & $k_{\operatorname{lme}}$ & $f_{\max} =f_{\operatorname{lme}}$ \\
       \hline
       \#1 & 4 & 2 & -4.6833 & 0.2033 & 3 & -4.6968\\
       \hline
       \#2 & 4 & 3 & -3.9524 & 0.0225 & 3 &  -3.9658\\
       \hline
       \#3 & 5 & 3 &   -2.7490 & 0.5442 & 3 & -2.8979\\
       \hline
       \#4 & 6 & 3 & -6.6434 & 0.3871 & 3 &  -6.7018\\
       \hline
       \#5 & 5 & 4 & -7.0067 & 0.5815 & 3 &   -7.0672\\
       \hline
    \end{tabular}
    \caption{Computational results for Example~\ref{ex:44} (ii)}
    \label{tab:ex:44}
\end{table}

\end{example}

\begin{example}\label{ex:92}
Consider the polynomial optimization problem:
\[
\left\{\begin{array}{cl}
\max\limits_{x\in\re^5} 
& \theta(x) = \big((x_3+x_4+x_5)^2-x_1x_2\big)^{20}\big(8-(x_5-x_2)^2\big)^{25}\\
\st & \|x\|\le 2,\, (x_3+x_4+x_5)^2-x_1x_2\ge 0,\\
&  8-(x_5-x_2)^2\ge 0.
\end{array}
\right.
\]
Since $\deg(\theta) = 45$, it is computationally expensive 
to directly apply Moment-SOS relaxations. 
Instead, we can solve the problem by reformulating it into 
an equivalent log-polynomial optimization problem.
There are two possible reformulations:
\begin{equation}\label{eq:ex92}
\left\{\begin{array}{cl}
    \max\limits_{x \in \mathbb{R}^5} &  20\log\left((x_3+x_4+x_5)^2-x_1x_2\right)+25\log\left(8-(x_5-x_2)^2\right)\\
    \st & \|x\|\le 2,\, (x_3+x_4+x_5)^2-x_1x_2\ge 0,\\
    &  8-(x_5-x_2)^2\ge 0;
\end{array}\right.
\end{equation}
\begin{equation}
    \label{eq:ex92_2}
\left\{\begin{array}{cl}
 \max\limits_{x \in \mathbb{R}^5} &  20\log\left((x_3+x_4+x_5)^2-x_1x_2\right)\\
 & \qquad +25\log\left(2\sqrt{2}-(x_5-x_2)\right)
 +25\log\left(2\sqrt{2}+(x_5-x_2)\right)\\
    \st & \|x\|\le 2,\, (x_3+x_4+x_5)^2-x_1x_2\ge 0,\\
    &  8-(x_5-x_2)^2\ge 0.
\end{array}\right.
\end{equation}
We consider the standard moment relaxation \eqref{eq:relax_log} of 
\eqref{eq:ex92}--\eqref{eq:ex92_2},
since these log-polynomial optimization problems have
relatively complex constraints.

(i) For the problem \eqref{eq:ex92}, it took around $1$ second to get 
$f_{\operatorname{mom},2} = 99.7252$ at the relaxation order $k=2$.
The corresponding moment optimizer satisfies the flat truncation 
condition \eqref{eq:flat_truncation} at $t=2$.
It admits the decomposition \eqref{eq:ft_decomp} with 
\[\begin{array}{ll}
    v_1^* = ( 0.0673,
   -0.3654,
   -1.2400,
   -1.2400,
   -0.8870),\\
   v_2^* =  (-0.0673,
    0.3654,
    1.2400,
    1.2400,
    0.8870).
\end{array}
\]
Denote the polynomials $p_1(x) = (x_3+x_4+x_5)^2-x_1x_2$
and $p_2(x) = 8-(x_5-x_2)^2$.
Since $p_1,p_2$ are even functions, 
it is clear that $p_1(v_1^*) = p_1(v_2^*)$ and $p_2(v_1^*) = p_2(v_2^*)$.
Therefore, by Theorem~\ref{tm:parallel_all}, we have
\[
f_{\max} = f_{\operatorname{mom},2} = 99.7252
\]
and both $v_1^*,v_2^*$ are global maximizers of the original optimization problem.

(ii) For the problem \eqref{eq:ex92_2}, 
it took 1.21 second to solve $f_{\operatorname{mom},2} = 101.6842$ 
at the relaxation order $k=2$.
The corresponding moment optimizer satisfies the flat truncation 
condition \eqref{eq:flat_truncation} at $t=2$.
It admits the decomposition \eqref{eq:ft_decomp} with 
\[\begin{array}{ll}
    v_1^* = (  0.0001,
   -0.0002,
   -1.1546,
   -1.1546,
   -1.1546),\\
   v_2^* =  (-0.0001,
    0.0002,
    1.1546,
    1.1546,
    1.1546).
\end{array}
\]
Denote the polynomials $p_1(x) = (x_3+x_4+x_5)^2-x_1x_2$, 
$p_2(x) = 2\sqrt{2}-(x_5-x_2)$ and $p_3(x) = 2\sqrt{2}+(x_5-x_2)$.
Since $p_2(v_1^*)\not=p_2(v_2^*)$, 
the conditions of Theorem~\ref{tm:parallel_all} are not satisfied.
Based on the computational results in (i), the moment relaxation of
\eqref{eq:ex92_2} is not tight.
\end{example}

\begin{example}\label{ex:dim10}
Consider the log-polynomial optimization of the form
\begin{equation*}
\left\{\begin{array}{cl}
    \max\limits_{x \in \mathbb{R}^{10}} & \sum\limits_{i=1}^{5} a_i \log p_i(x) \\
    \st & p_i(x) - i \geq 0, \quad (i \in [5]),\\
    & 6 - (x_6-x_7-x_8)^2 - x_9^2 \geq 0, \\
    & 7 - (x_1-x_{10})^2 - (x_2-x_9)^2 \geq 0,\\
    & (x_1 + x_2 + x_3)^2 + (x_4+x_5)^2 - 8 \geq 0,\\ 
    & (x_2+x_4-x_6+x_8-x_{10})^2 - 9 \geq 0,
\end{array}\right.
\end{equation*}
where each $p_i(x)$ takes the form
\[
p_i(x)\coloneqq \alpha_i - \big(\beta_i(x_{2i-1} + b_i)^2 
+ \gamma_i(x_{2i}+c_i)^2 + \delta_i(x_{2i-1}+d_i)(x_{2i}+e_i)\big).
\]
Let weights $a_i$ and parameters of $p_i$ be given as follows.
\smallskip
\begin{center}
    \begin{tabular}{|c|c|c|c|c|c|c|c|c|c|}
    \hline
        $i$ & $a_i$ & $\alpha_i$ & $\beta_i$ & $\gamma_i$ & $\delta_i$  & $b_i$ & $c_i$ & $d_i$ & $e_i$\\ \hline
        1 & 3 & 10 & 2 & 3 & -3  & 0.5 & 0.5 & -0.5 & 0.5\\ \hline
        2 & 2 & 12 & 3 & 2 &  -1  & -0.6 & 0.1 & -0.6 & -0.1\\ \hline
        3 & 1 & 11 & 4 & 1 & 0  & 0.7 & -0.2 & 0 & 0 \\ \hline
        4 & 1 & 15 & 2 & 5 & -2 & -0.8 & 0.3 & -0.8 & 0.3 \\ \hline
        5 & 5 & 13 & -3 & 2 & 0  & -0.9 & 0.4 & 0 & 0 \\ \hline
    \end{tabular}
\end{center}
\smallskip
By checking the positive semidefiniteness of their Hessian matrices, 
one can easily verify that $p_1, \ldots, p_4$ are concave, whereas $p_5$ is not.
It took 1.95 second to solve the relaxation \reff{eq:relax_log} for $k=2$.
The flat truncation condition (\ref{eq:flat_truncation}) holds for $t=2$ and $r=1$.
By Corollary \ref{cor:rank1}, 
the moment relaxation is tight.
We get the global optimal value $f_{\max} = f_{\operatorname{mom,2}} =  36.3994,$
and the optimizer
\begin{equation*}
        \begin{split}
            x^* = (&-1.4038, -1.5957, 0.4259, -0.4182, -0.7548, \\
                   & 0.6579, 0.9798, -0.3904, -2.4485, -0.0622).
        \end{split}
    \end{equation*}
\end{example}

\begin{example} \label{ex:dim12}
Consider the log-polynomial optimization problem:
\begin{equation*}
\left\{\begin{array}{cl}
    \max\limits_{x \in \mathbb{R}^{12}} & \sum\limits_{i=1}^{10} \log p_i(x) \\
    \st & (x_1 + x_2 + x_3 + x_4 + x_5 + x_6)^2 \leq 15,\\
    & (x_7 + x_8 + x_9 + x_{10} + x_{11} + x_{12})^2 \leq 15,\\
    & (x_1 - x_3 + x_5 - x_7 + x_9 - x_{11})^2 \leq 8, \\
    & (x_2 - x_4 + x_6 - x_8 + x_{10} - x_{12})^2 \leq 8, \\
    & (x_1 + x_{12})^2 + (x_2 + x_{11})^2 \leq 9,\\
    & (x_3 - x_{10})^2 + (x_4 - x_9)^2 + (x_5-x_8)^2 \leq 9 \\
    & \sum\limits_{i=1}^6 x_{2i}^2  + 2(x_1^2 + x_3^2 + x_7^2 + x_9^2) + 3(x_5^2 + x_{11}^2) \leq 20,\\
    & \sum\limits_{i=1}^6 (x_{2i-1} - x_{2i}) \leq 5, \, (x_1 + x_6 + x_{12})^2 \leq 4, \\
    & x_1 - x_{12} \leq 3, \, \mathbf{1}^T x \geq 0,
\end{array}\right.
\end{equation*}
In the above, for each $i = 1\ddd 10$,
\begin{equation*}
    p_i(x) \coloneqq \alpha_i - \beta_i \big(h_i(x)\big)^4,
\end{equation*}
where for $i=1,\ldots, 5$,
\[
    h_i(x) \coloneqq (x_{2i}+a_i) + (x_{2i+1}+b_i),
\]
and for $i=6,\ldots, 10$ ($j_1\bmod j_2$ denotes the remainder of $j_1$ modulo $j_2$)
\[ \begin{array}{ll}
    h_i(x) \coloneqq \;& (-1)^{i-4}(x_{i-5}+a_i) 
    + (-1)^{i+1} (x_i +b_i) \\
    & + (-1)^{(i+4) \bmod 12} (x_{(i+4) \bmod 12 + 1} + c_i).
\end{array}
 \]
Moreover, parameters in $p_i,h_i$ are given in the following table:
\smallskip
\begin{center}
    \begin{tabular}{|c|c|c|c|c|c|c|c|c|c|c|}
    \hline
    $i$ & 1 & 2 & 3 & 4 & 5 & 6 & 7 & 8 & 9 & 10 \\ \hline
    $\alpha_i$ & 20 & 25 & 18 & 22 & 30 & 15 & 28 & 19 & 21 & 26 \\ \hline
    $\beta_i$ & 1.5 & 1.2 & 2.5 & 1.1 & 1.6 & 1.9 & 2.3 & 1.7 & 2.4 & 1.4 \\ \hline
    $a_i$ & -0.2 & -1 & 0.4 & 0.4 & -1.1 & -0.8 & -0.3 & -1.7 & 1.2 & 1.7 \\ \hline
    $b_i$ & -1.6 & -0.4 & -1 & 0.8 & -1.5 & -0.7 & 0 & -1 & -1.9 & 0.9 \\ \hline
    $c_i$ & - & - & - & - & - & -1.3 & 0.7 & -0.7 & 0.8 & -0.5 \\ \hline
\end{tabular}
\end{center}
\smallskip
By checking the positive semidefiniteness of Hessian matrices, 
one can easily verify that all $p_i$ are SOS-concave 
and the feasible set is convex.
So this log-polynomial optimization problem is concave.
It took $3.19$ seconds to solve the standard moment 
relaxation \reff{eq:relax_log} with the relaxation order $k=2$.
The corresponding moment optimizer does not satisfy the flat truncation (\ref{eq:flat_truncation}). 
However, by Theorem \ref{thm:sosconcave}, 
the moment relaxation \eqref{eq:relax_log} is always tight 
for concave log-polynomial optimization problems. Thus,
$
f_{\max} = f_{\operatorname{mom,2}} =  30.3426,
$
and the obtained global maximizer is 
\begin{equation*}
        \begin{split}
            x^* = (& 1.5452, 0.2145, -0.8337,  1.7120,-1.1664, 0.4258, \\
                   & -0.2089, 0.3101, 0.5313, -2.1910, 0.4054, -0.7443).
        \end{split}
    \end{equation*}
\end{example}

\subsection{Applications}
\label{sec:app}

\begin{example}[Paternity analysis problem \cite{McCulloch87}] 
\label{ex:offspring}
Consider a single locus with $n$ alleles involving a single mother 
and $n$ possible fathers.
Suppose there are $t$ possible genotypes.
Let $x_j$ denote the probability that the $j$th male is the biological father 
of a randomly chosen offspring for $j=1\ddd k$.
Our goal is to estimate the probabilities $x=(x_1,\ldots, x_n)$ 
using genotype data from a potentially multiply-parented litter.
This can be done by solving 
\begin{equation}\label{eq:offspring}
\left\{\begin{array}{cl}
\max\limits_{x\in\re^n} & \sum\limits_{i=1}^t N_i \log \Big( \sum\limits_{j=1}^n P_{ij}x_j \Big)\\
\st & \mathbf{1}^Tx = 1,\,\, x \geq \mathbf{0},
\end{array}\right.
\end{equation}
where each $N_i$ denotes the number of offspring in the litter with 
the $i$-th possible genotype,
and each $P_{ij}$ is a given probability for an offspring to obtain the 
$i$-th genotype from the $j$-th potential father.
Let $\bar{x} = (x_1,\ldots, x_{n-1})$.
With the substitution $x_n = 1-\mathbf{1}_{n-1}^T\bar{x}$, 
we can reformulate \eqref{eq:offspring} into 
\begin{equation}\label{eq:offspring_updated}
\left\{\begin{array}{cl}
\max\limits_{\bar{x}\in\re^{n-1}} & \sum\limits_{i=1}^t N_i\log\Big(
P_{in}+\sum\limits_{j=1}^{n-1}(P_{ij}-P_{in})x_j\Big)\\
\st &  \bar{x} = (x_1,\ldots, x_{n-1})\ge 0,\\
	& 1-\mathbf{1}_{n-1}^T\bar{x}\ge 0.
\end{array}
\right.
\end{equation}
This is a log-polynomial optimization problem with simplex constraints.
Thus, it can be solved with the tighter moment relaxation \eqref{eq:relax_lme} 
using LMEs in Example~\ref{ex:lmes} (ii).
Let $N_i, P_{ij}$ be the simulation data chosen from \cite{McCulloch87}.
We solve \eqref{eq:offspring_updated} by the 
tighter moment relaxation \eqref{eq:relax_lme}.
When the relaxation is tight, we can recover the optimizer of the original 
problem \eqref{eq:offspring} by letting $x = (\bar{x}, 1-\mathbf{1}^T\bar{x})$.
For each instance, when the computed moment optimizer satisfies 
the flat truncation condition \eqref{eq:flat_truncation},
the rank of the corresponding moment matrix is one. 
By Corollary~\ref{cor:rank1}, the moment relaxation is tight
for the original problem.
The detailed simulation data and our numerical results are reported 
in Table~\ref{tab:offspring}.
In the table, we denote parameters $N = (N_1,\ldots, N_t)$ 
and $P = (P_{ij})\in\re^{t\times n}$.
The value $k$ stands for the relaxation order 
where the flat truncation holds and $r$ is the rank of the moment matrix 
associated with the moment optimizer.
The notation $x^*$ denotes the computed optimizer of the original log-polynomial
optimization problem \eqref{eq:offspring}.
The column `time' reports the elapsed time to solve the $k$-th 
order tighter moment relaxation, 
which is counted by seconds.
\begin{table}[ht]
\centering
\begin{tabular}{cccccccc}\hline
$n$ & $t$ & $N$ & $P$ & $k$ & $r$ & $x^*$ & time\\ \hline
2 & 2 &$\left[\begin{array}{r}
	77 \\  23
\end{array}\right]$ & $\begin{bmatrix}
	0.5 & 1 \\
	0.5 & 0
\end{bmatrix}$ & 2  & 1 & $\left[\begin{array}{r}
	0.4600  \\  0.5400
\end{array}\right]$  & 1.01\\ \hline 
2 & 2 &$\left[\begin{array}{r}
	63  \\  37
\end{array}\right]$ & $\begin{bmatrix}
	0.5 & 1 \\
	0.5 & 0
\end{bmatrix}$ & 2  & 1 &  $\left[\begin{array}{r}
	0.7400  \\  0.2600
\end{array}\right]$ & 1.35\\ \hline
2 & 3 &$\left[\begin{array}{r}
	49  \\  40  \\  11
\end{array}\right]$ &  $\begin{bmatrix}
	0.5 & 0.875 \\
	0.25 & 0.125 \\
	0.25 & 0
\end{bmatrix}$ & 3 & 1 & $\left[\begin{array}{r}
	0.8813  \\  0.1187
\end{array}\right]$ & 1.40\\ \hline
2 & 3 &$\left[\begin{array}{c}
	83  \\   2 \\   15
\end{array}\right]$ & $\begin{bmatrix}
	0.5 & 0.25\\
	0.5 & 0.5\\
	0 & 0.25
\end{bmatrix}$ & 2  & 1 & $\left[\begin{array}{r}
	0.6939  \\  0.3061
\end{array}\right]$ & 1.61\\ \hline
2 & 3 &$\left[\begin{array}{r}
	63 \\  17\\   20
\end{array}\right]$ & $\begin{bmatrix}
	0.5 & 0.25 \\
	0.5 & 0.5 \\
	0 & 0.25
\end{bmatrix}$ & 2  &1 & $\left[\begin{array}{r}
	0.5181  \\  0.4819
\end{array}\right]$ & 1.10\\ \hline
2 & 4 &$\left[\begin{array}{c}
	59  \\   8  \\  16 \\   17
\end{array}\right]$ & $\begin{bmatrix}
	0.25 & 0.5  \\
	0.25 & 0.5  \\
	0.25 & 0 \\
	0.25 & 0
\end{bmatrix}$ & 3  & 1 & $\left[\begin{array}{r}
	0.6599  \\  0.3401
\end{array}\right]$ & 1.03\\ \hline
2 & 5 &$\left[\begin{array}{c}
	7   \\  9  \\   4  \\  33 \\   47
\end{array}\right]$ & $\begin{bmatrix}
	0.25 & 0  \\
	0.25 & 0  \\
	0.25 & 0 \\
	0.25 & 0.5 \\
	0 & 0.5
\end{bmatrix}$ & 4  & 1 &  $\left[\begin{array}{r}
	0.2463  \\  0.7537
\end{array}\right]$ & 1.49\\ \hline
3 & 3 & $\left[\begin{array}{r}
	39 \\ 38 \\ 23
\end{array}\right]$ & $\begin{bmatrix}
	0.5 & 0 & 0.875 \\
	0.25 & 0.75 & 0.125 \\
	0.25 & 0.25 & 0
\end{bmatrix}$ & 3 & 1 & $\left[\begin{array}{r}
	0.6400  \\  0.2800  \\  0.0800
\end{array}\right]$ & 1.18\\ \hline
3 & 4 &$\left[\begin{array}{r}
	29  \\  21 \\   88\\    62
\end{array}\right]$ & $\begin{bmatrix}
	0.5 & 0 & 0 \\
	0.25 & 0.75 & 0.25 \\
	0.25 & 0.25 & 0.5 \\
	0 & 0 & 0.25
\end{bmatrix}$ & 3  & 1 & $\left[\begin{array}{r}
	0.2240 \\   0.0000 \\   0.7760
\end{array}\right]$ & 1.14\\ \hline
\end{tabular}
\caption{Numerical results for Example \ref{ex:offspring}}
\label{tab:offspring}
\end{table}
\end{example}

\begin{example}[Latent class models \cite{ChenLCM20}]
Latent class models (LCMs) \cite{McCulloch87} are a type of finite 
mixture model used to identify hidden, unobserved categorical groups 
within a population based on observed categorical data.
Let $y = (y_1,\ldots, y_d)$ be a vector of categorical random 
variables, where each $y_j$ takes values from $c_j$ categories, 
say, $\{1,\ldots, c_j\}$.
Assume each $y_j$ is independently distributed 
and its distribution ranges from $T$ subgroups,
which is determined by an indicator $s\in \{1,\ldots, T\}$. 
Let $I(\cdot)$ denote the Iverson bracket function
such that $I(P) = 1$ if $P$ is true and zero otherwise.
Then the probability density function of $y$ in the $t$-th 
subgroup is
\[
\prod\limits_{j=1}^d\prod\limits_{l=1}^{c_j} \pi_{t,j,l}^{I(y_j=l)}
\quad \mbox{with}\quad \pi_{t,j,l} = \mbox{Pr}(y_j = l|s=t),
\]
where $\mbox{Pr}(\cdot)$ is the probability notation.
The overall mixture density of latent class model is then a weighted sum
\[
p(y|\eta, \pi) = \sum\limits_{t=1}^T
\Big(\eta_t	\prod\limits_{j=1}^d\prod\limits_{l=1}^{c_j} 
\pi_{t,j,l}^{I(y_j=l)}\Big),
\]
where each $\eta_t\ge0$ denotes the weight of $t$-th subgroup that sums up to one. 
Consider $N$ samples drawn from the LCM distribution, denoted as $y^{(1)}, \ldots, y^{(N)}$.
Then the MLE for LCM is formulated as
\begin{equation}\label{eq:LCM}
\left\{\begin{array}{cl}
	\max\limits_{\eta, \pi} & \sum\limits_{i=1}^N \log \left(\sum\limits_{t=1}^T \eta_t	\prod\limits_{j=1}^d\prod\limits_{l=1}^{c_j} \pi_{t,j,l}^{I(y_j^{(i)}=l)}\right)\\
	\st & \eta = (\eta_1,\ldots, \eta_T),\\
	&  \pi = (\pi_{t,j,l})_{t\in [T], j\in[d], l\in[c_j]},\\
    & \eta\ge 0,\, \pi\ge 0,\, \sum\limits_{t=1}^T\eta_t = 1,\\
	& \sum\limits_{l=1}^{c_j} \pi_{t,j,l} = 1\, ( t\in[T],\, j\in[d]).
\end{array}
\right.
\end{equation}
We consider three instances using the same settings as in \cite{ChenLCM20}.
For each instance, we simulate $N=500$ samples for ten times 
from the corresponding LCM and approximate \eqref{eq:LCM} 
by its standard moment relaxation \eqref{eq:relax_log} at the initial order.
The resulting approximation quality is assessed by 
comparing it to the log-likelihood of the true parameters 
which is reported in \cite{ChenLCM20}.
Thus, the approximation quality depends on the chosen moment 
relaxation order and the sampling data.
The best numerical results are reported as follows.

(i) For $d=1, T = 2$, suppose each $y_i$ is a binary variable, 
i.e., $y_i\in\{0,1\}$. Let
\[\begin{array}{llll}
\eta_1 = 0.5, &  \eta_2 = 0.5, &  \pi_{1,1,1} = 0.4, &  \pi_{1,1,2} = 0.6,\\
\pi_{2,1,1} = 0.8, & \pi_{2,1,2} = 0.2.
\end{array}\] 
In our numerical experiment, it took around 0.78 second for us to obtain 
the moment upper bound $f_{\operatorname{mom},2} = -335.2519$.
This result is very close to the true likelihood $f^* = -335.29$.

(ii) For $d=1, T=3$, suppose each $y_i$ is a binary variable, 
i.e., $y_i\in\{0,1\}$. Let
\[
\begin{array}{lllll}
	\eta_1 = 0.5, & \eta_2 = 0.3, & \eta_3 = 0.2, & \pi_{1,1,1} = 0.4, &  \pi_{1,1,2} = 0.6,\\
	\pi_{2,1,1} = 0.8, & \pi_{2,1,2} = 0.2 & \pi_{3,1,1} = 0.1, & \pi_{3,1,2} = 0.9.
\end{array}
\]
In our numerical experiment, it took around 0.57 second for us to obtain 
the moment upper bound $f_{\operatorname{mom},2} = -344.45$.
This result is very close to the true log-likelihood $f^* = -344.49$.

(iii) For $d=2, T=2$, suppose each $y_i$ is a binary variable, 
i.e., $y_i\in\{0,1\}$. Let
\[
\begin{array}{llllll}
	\eta_1 = 0.5, & \eta_2 = 0.5,  & \pi_{1,1,1} = 0.4, &  \pi_{1,1,2} = 0.6,
	& \pi_{2,1,1} = 0.8,\\
\pi_{2,1,2} = 0.2 & \pi_{1,2,1} = 0.1, & \pi_{1,2,2} = 0.9 & \pi_{2,2,1} = 0.6, & \pi_{2,2,2} = 0.4.
\end{array}
\]
In our numerical experiment, it took around 0.53 second for us 
to obtain the moment upper bound $f_{\operatorname{mom},2} = -657.79$.
This result is close to the true log-likelihood $f^* = -661.30$. 
\end{example}

\section{Conclusion and Discussions}\label{sec:con}

This work explores log-polynomial optimization problems, 
characterized by logarithmic polynomial objectives with 
polynomial equality and inequality constraints. 
Using the truncated $K$-moment problems, 
we derive a hierarchy of moment relaxations
for the original non-convex formulation. 
Under specific conditions, we show that the relaxation is tight 
and provides exact global optimizer(s). 
We further establish criteria for detecting tightness 
and for extracting global solutions when they exist.
In addition, we introduce the technique of Lagrange 
multiplier expressions to construct tighter relaxations 
for log-polynomial optimization with box, simplex and
ball constraints. 
The relations among these two kinds of moment relaxations
and the original log-polynomial optimization are
summarized in Proposition~\ref{prop:rel}.

The proposed approach has practical relevance in applications 
such as maximum likelihood estimation and broader areas involving 
entropy-related optimization. 
It is interesting future work to address the scalability of this
moment relaxation approach. 
A promising way is to exploit sparsity and structure within the moment matrices. 
Another potential work is to extend the framework for non-polynomial 
or dynamic constraints.

\medskip
\noindent
{\bf Acknowledgement}
Jiawang Nie is partially supported by the NSF grant DMS- 2513254,
and Xindong Tang is partially supported by HKBU-15303423 and HKBU-22304125.

\end{document}